\documentclass[10pt]{amsart}
\usepackage{amssymb,amstext,amsmath,amscd,amsthm,amsfonts,enumerate,latexsym,stmaryrd,multicol,geometry,graphicx,mathrsfs,bm}
\usepackage[usenames]{color}
\usepackage[all]{xy}
\newtheorem{thm}{Theorem}[section]
\newtheorem{lem}[thm]{Lemma}
\newtheorem{prop}[thm]{Proposition}
\newtheorem{cor}[thm]{Corollary}
\theoremstyle{definition}
\newtheorem{dfn}[thm]{Definition}
\newtheorem{ques}[thm]{Question}

\newtheorem{rem}[thm]{Remark}

\newtheorem{ex}[thm]{Example}

\theoremstyle{remark}

\newtheorem*{claim*}{Claim}
\newtheorem*{ac}{Acknowledgments}
\newtheorem*{conv}{Convention}

\numberwithin{equation}{thm}
\def\A{\mathcal{A}}
\def\add{\operatorname{\mathsf{add}}}
\def\Ass{\operatorname{Ass}}

\def\c{\operatorname{\mathsf{C}}}
\def\C{\mathcal{C}}

\def\CM{\mathsf{CM}}

\def\D{\mathcal{D}}
\def\d{\operatorname{\mathscr{D}}}
\def\db{\operatorname{\mathsf{D^b}}}
\def\depth{\operatorname{depth}}
\def\dim{\operatorname{dim}}
\def\ds{\operatorname{\mathsf{D_{sg}}}}

\def\edim{\operatorname{edim}}

\def\Ext{\operatorname{Ext}}

\def\ge{\geqslant}
\def\H{\mathrm{H}}
\def\Hom{\operatorname{Hom}}

\def\inf{\operatorname{inf}}

\def\le{\leqslant}

\def\m{\mathfrak{m}}
\def\max{\operatorname{max}}
\def\Max{\operatorname{Max}}
\def\Min{\operatorname{Min}}
\def\mod{\operatorname{\mathsf{mod}}}
\def\n{\mathfrak{n}}
\def\NF{\operatorname{NF}}

\def\p{\mathfrak{p}}

\def\proj{\operatorname{\mathsf{proj}}}
\def\q{\mathfrak{q}}

\def\rank{\operatorname{rank}}

\def\S{\mathcal{S}}
\def\Sing{\operatorname{Sing}}
\def\size{\operatorname{size}}
\def\soc{\operatorname{soc}}
\def\Spec{\operatorname{Spec}}
\def\sup{\operatorname{sup}}
\def\syz{\Omega}
\def\T{\mathcal{T}}

\def\thick{\operatorname{\mathsf{thick}}}

\def\V{\mathrm{V}}
\def\X{\mathcal{X}}
\def\Y{\mathcal{Y}}

\def\ZZ{\mathbb{Z}}
\begin{document}
\title{Ranks of Verdier quotients of derived categories over local rings} 
\author{Souvik Dey}
\address[Dey]{Department of Mathematical Sciences, University of Arkansas, 850 West Dickson Street, Fayetteville, Arkansas 72701, USA}
\email{souvikd@uark.edu}
\urladdr{https://sites.google.com/view/souvikdey/home}
\author{Yuki Mifune}
\address[Mifune]{Graduate School of Mathematics, Nagoya University, Furocho, Chikusaku, Nagoya 464-8602, Japan}
\email{yuki.mifune.c9@math.nagoya-u.ac.jp}
\thanks{2020 {\em Mathematics Subject Classification.} 13D09, 18G80}
\thanks{{\em Key words and phrases.} derived category, maximal Cohen--Macaulay module, punctured spectrum, rank, singularity category, syzygy category, Verdier quotient.}
\begin{abstract}
Let $R$ be a commutative noetherian local ring with residue field $k$.
Denote by $\operatorname{\mathsf{D^b}}(R)$ the bounded derived category of finitely generated $R$-modules.
In this paper, we introduce the notion of ranks of triangulated categories and study Verdier quotients of $\operatorname{\mathsf{D^b}}(R)$ from the viewpoint of this invariant.
Our main result determines the rank of the Verdier quotient $\operatorname{\mathsf{D^b}}(R)/\operatorname{\mathsf{thick}}(R\oplus k)$ in terms of the ranks of the categories of maximal Cohen--Macaulay modules on the punctured spectrum.
\end{abstract}
\maketitle
\section{Introduction}
Let $R$ be a commutative noetherian local ring with residue field $k$.
We denote by $\mod R$ the category of finitely generated $R$-modules, and by $\db(R)=\db(\mod R)$ the bounded derived category of $\mod R$.
The Rouquier dimension of a triangulated category measures the number of extensions required to build the whole category out of a single object, and has been extensively studied; see \cite{AT2015,BV,DT2015a,DT2015b,Rou} for instance.
In this paper, we introduce the notion of the rank of a triangulated category.
Roughly speaking, this invariant measures the number of extensions required to generate the whole category from a single object, up to taking finite direct sums and direct summands, but without allowing shifts; for the precise definition, see Definition \ref{def_rank}.
The main difference between rank and Rouquier dimension is that rank does not allow shifts in the generation process.
In particular, rank is always at least the Rouquier dimension.
For example, if $R$ is excellent, then a theorem of Aoki \cite{Aoki} shows that $\db(R)$ has finite Rouquier dimension, whereas the rank of $\db(R)$ is always infinite; see Remark \ref{rank_db}.
Thus, rank detects information that is not captured by Rouquier dimension.
These observations motivate the study of the finiteness and precise values of ranks for Verdier quotients of $\db(R)$.
A fundamental Verdier quotient of $\db(R)$ is the singularity category, denoted by $\ds(R)=\db(R)/\thick(R)$.
When $R$ is Gorenstein, the rank of $\ds(R)$ coincides with the rank, in the sense of Dao--Takahashi \cite{DT2014}, of the category $\CM(R)$ of maximal Cohen--Macaulay $R$-modules; see Proposition \ref{frob}.
Another important example is obtained by further quotienting the singularity category by the thick closure of the residue field $k$. 
Namely, we consider the Verdier quotient
\[
\d(R)=\ds(R)/\thick(k)=\db(R)/\thick(R\oplus k).
\]
This category was studied in \cite{MT}, where the condition that $\d(R)$ admits an additive generator was characterized in terms of being locally of finite Cohen--Macaulay representation type on the punctured spectrum.
Note that the category $\d(R)$ is trivial if and only if $R$ has an isolated singularity by \cite{T2014}.
Since rank zero is equivalent to admitting an additive generator, our aim in this paper is to study the structure of $\d(R)$ in a broader framework by estimating its rank.
The main result of this paper is the following.
\begin{thm}[Corollary \ref{equality}]\label{thm_int}
Let $(R,\m)$ be a noetherian local ring which is locally Gorenstein on the punctured spectrum.
Assume that $R$ has finite singular locus and does not have an isolated singularity.
Then the following equality holds:
\[
\rank\d(R)=\sup\{\rank\CM(R_{\p})\mid\p\in\Sing(R)\setminus\{\m\}\}.
\]
\end{thm}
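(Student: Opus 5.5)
We prove the two inequalities separately. First we record the basic behaviour of rank. Rank does not increase under essentially surjective triangle functors, and more generally under Verdier quotients. If $\T$ is covered by finitely many additive images of triangulated categories of rank at most $n$, then $\T$ has rank at most $n$, provided the relevant objects can be combined. The key structural input is the localization functor $\d(R)\to\ds(R_\p)$ for each $\p\in\Sing(R)\setminus\{\m\}$. Since $k_\p=0$ and $R_\p$ is perfect, localization $\db(R)\to\db(R_\p)\to\ds(R_\p)$ kills $\thick(R\oplus k)$ and so factors through $\d(R)$. It is essentially dense up to summands, because every finitely generated $R_\p$-module is a localization of an $R$-module. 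Since $R_\p$ is Gorenstein, Proposition \ref{frob} identifies $\rank\ds(R_\p)=\rank\CM(R_\p)$.

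\emph{Inequality $\ge$.} For each $\p$, the functor $\d(R)\to\ds(R_\p)$ is a triangle functor that is dense up to direct summands. If $G$ generates $\d(R)$ in $n$ steps (without shifts), then the image of $G$ generates $\ds(R_\p)$ in $n$ steps. Hence $\rank\CM(R_\p)=\rank\ds(R_\p)\le\rank\d(R)$. This step is routine once the preliminary lemma on functors is in place.

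\emph{Inequality $\le$.} Let $\Sing(R)\setminus\{\m\}=\{\p_1,\dots,\p_r\}$, which is finite and nonempty by hypothesis, and suppose each $\CM(R_{\p_i})$ has rank at most $n$ with generator $G_i$; otherwise there is nothing to prove. Choose $M_i\in\mod R$ with $(M_i)_{\p_i}\cong G_i$. After replacing $M_i$ by a high syzygy, we may assume $M_i$ is maximal Cohen--Macaulay at every $\p_j$. Set $G=\bigoplus_i M_i$. Take $X\in\d(R)$. Via a syzygy, $X$ is isomorphic up to shift to a module $N$, and shifts can be undone at the end since $\Omega$ is invertible in $\d(R)$ up to the identification of syzygies. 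Each $N_{\p_i}$ lies in the $n$-th additive closure of extensions built from $G_{\p_i}$ in $\CM(R_{\p_i})$. We then need to lift these filtrations to $R$ so that they are compatible simultaneously at all $\p_i$. We would try to do this via the Dao--Takahashi-style argument used in \cite{MT} for the rank-zero case: a morphism in $\d(R)$ is determined by its localizations at the finitely many non-maximal singular primes, because objects supported only at $\m$ or with finite projective dimension vanish in $\d(R)$. Concretely, we would show that $\d(R)$ is equivalent, or at least faithfully comparable with respect to building, to a subcategory of $\prod_i\ds(R_{\p_i})$, using the description of $\Hom$ in $\d(R)$ as a colimit over maps with cones in $\thick(R\oplus k)$. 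Triangles in $\prod_i\ds(R_{\p_i})$ then lift to triangles in $\d(R)$ up to summands.

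The main obstacle is this lifting step. A triangle $A_{\p}\to B_{\p}\to C_{\p}\to$ in $\ds(R_\p)$ must be realized by a triangle in $\d(R)$ for all $\p_i$ at once. Morphisms $(M)_\p\to (N)_\p$ lift to $R$-maps after multiplying by an element $s\notin\bigcup\p_i$. Here one must use that $s$ can be chosen in $\m$ but outside every $\p_i$ (prime avoidance), and that multiplication by such $s$ becomes invertible in $\d(R)$, since its cone has finite length homology, hence lies in $\thick(k)$. With this, the functor $\d(R)\to\prod_i\ds(R_{\p_i})$ is fully faithful, and rank is computed componentwise as a maximum. This yields $\rank\d(R)\le\max_i\rank\CM(R_{\p_i})$.
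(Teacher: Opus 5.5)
\textbf{Verdict: the lower bound is fine; the upper bound has a genuine gap.} Your lower bound is correct and is the paper's argument (Proposition \ref{lower} combined with Proposition \ref{frob}(2)). The upper bound rests entirely on two claims: that $F\colon\d(R)\to\prod_i\ds(R_{\p_i})$ is fully faithful, and that rank can then be read off componentwise. Neither is established.

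(i) The justification you give for $s\notin\bigcup_i\p_i$ acting invertibly is wrong, although the conclusion holds. For a module $M$, the cone of $s$ has homology $M/sM$ and $(0:_Ms)$. These need not have finite length when $\dim R\ge 2$, as in Example \ref{eg_dim2}. The correct reason is that the cone is perfect at every $\q\ne\m$: it is zero at each $\p_i$, and $R_\q$ is regular otherwise. Hence it vanishes in $\d(R)$.

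(ii) Invertibility of such $s$ only makes $H=\Hom_{\d(R)}(X,Y)$ a module over the semilocal ring $R_U$, where $U=R\setminus\bigcup_i\p_i$. It gives neither faithfulness nor a simultaneous lift of a tuple $(f_i)_i$; note that a functor killing no nonzero object need not be faithful. Your lift of a single $f_i$ also fails as stated. Morphisms in $\ds(R_{\p_i})$ are not just module maps. Moreover, the denominator of a module map $M_{\p_i}\to N_{\p_i}$ lies outside $\p_i$ but possibly inside some $\p_j$, where it is not invertible. What you actually need is:
\begin{itemize}
\item $H_{\p_i}\cong\Hom_{\ds(R_{\p_i})}(X_{\p_i},Y_{\p_i})$;
\item $H$ is annihilated by a power of $\p_1\cdots\p_r$, for instance via $\underline{\Hom}_R(M,M)\subseteq\Ext^1_R(M,\Omega M)$ for a high syzygy $M$ representing $X$, whose nonfree locus lies in $\Sing(R)$.
\end{itemize}
Then the Chinese remainder theorem gives $H\cong\prod_iH_{\p_i}$.

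(iii) Even a fully faithful functor does not transport generation. A filtration of $F(X)$ by $F(G)$ in the product passes through objects that need not lie in the image of $F$. So you must also prove that the image is dense up to summands, and that building in $n+1$ steps descends to a dense triangulated subcategory.

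The route is viable. Indeed $\d(R)\simeq\ds(R_U)$, since both are quotients of $\db(R)$ by the complexes perfect at every $\p_i$. But steps (ii) and (iii) are the whole content, and they are missing.

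For comparison, the paper never lifts morphisms or triangles. Its key observation is Lemma \ref{lem_add}: localization commutes with the module-level operation ${|-|}_n^R$. So Takahashi's rank-zero gluing lemma \cite[Lemma 3.7(2)]{T2023} applies to the single subcategory $\Y={|\X\cup\{R\}|}_n^R$. It yields an exact sequence $0\to L\to\Omega^dM\oplus N\to Y\to0$ with $Y\in\Y$ and $L$ locally free on the punctured spectrum, hence $\Omega^dM\in{|\X|}_n^{\d(R)}$ (Lemma \ref{lem_lift}). Shifts are then absorbed via condition $(\mathrm{CS})$ and Lemma \ref{drgen}; Proposition \ref{suff_cs} is where Gorensteinness on the punctured spectrum enters.

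In particular, your remark that shifts ``can be undone at the end since $\Omega$ is invertible'' is not valid for rank. From $N\in{|G|}_{n+1}$ you only get $N[l]\in{|G[l]|}_{n+1}$, with $l$ depending on the object. Controlling exactly this is what $(\mathrm{CS})$ does in the paper.
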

Here, $\Sing(R)$ denotes the singular locus of $R$, namely, the set of prime ideals $\p$ such that $R_{\p}$ is not regular.
As an application of Theorem \ref{thm_int}, we give examples in the cases where the rank of $\d(R)$ is precisely one and where it is precisely two.
These examples are not locally of finite Cohen--Macaulay representation type on the punctured spectrum, and hence the result of \cite{MT} does not apply to them.
Moreover, Theorem \ref{thm_int} suggests that the rank of $\d(R)$ can be detected from the localizations over the punctured spectrum.
Indeed, we give an example where $\d(R)$ is nontrivial and has finite rank even though $R$ itself is not excellent; see Example \ref{eg_notexc}.

The organization of this paper is as follows.
In Section 2, we introduce the ranks of triangulated categories and compare them with other invariants related to generation. 
We also study the finiteness of the rank of the singularity category.
In Section 3, we study the rank of the Verdier quotient $\d(R)$ in terms of its localizations on the punctured spectrum, prove the main theorem, and give several applications.
In Section 4, we show that the local Cohen--Macaulay assumption in \cite[Theorem 4.7]{MT} can be removed by using finite syzygy type.
\begin{conv}
Throughout this paper, $R$ denotes a commutative noetherian ring.
When $R$ is local, we denote its maximal ideal by $\m$ and its residue field by $k$, unless otherwise specified.
All subcategories are assumed to be strictly full.
\end{conv}
\section{Ranks of triangulated categories}
In this section, we introduce the notion of ranks of triangulated categories and compare it with other invariants related to generation. 
We then study the rank of the singularity category.
We begin by defining the ranks of triangulated categories and recalling the ranks and sizes of subcategories of abelian categories.
For the latter notion, see \cite{DT2014,IT2016}.
\begin{dfn}\label{def_rank}
Let $\T$ be a triangulated category.
Let $\A$ be an abelian category.
\begin{enumerate}[\rm(1)]
\item
Let $\C$ be a subcategory of $\T$.
Set ${|\C|}_{0}^{\T}=0$.
We denote by ${|\C|}_{1}^{\T}=\add_{\T}\C$ the additive closure of $\C$ in $\T$.
For subcategories $\C,\D$ of $\T$, we denote by $\C\ast\D$ the subcategory of $\T$ consisting of objects $E$ such that there exists an exact triangle
$C\to E\to D\to C[1]$
in $\T$ with $C\in\C$ and $D\in\D$.
For an integer $r>1$, we inductively define ${|\C|}_{r}^{\T}={\left|{|\C|}_{r-1}^{\T}\ast{|\C|}_{1}^{\T}\right|}_{1}^{\T}$.
The {\em rank} of $\T$, denoted by $\rank\T$, is defined as the infimum of integers $n\ge 0$ such that $\T={|G|}_{n+1}^{\T}$ for some object $G\in\T$.
Similarly, we set ${\langle\C\rangle}_{0}^{\T}=0$, ${\langle\C\rangle}_{1}^{\T}=\add_{\T}\{C[i]\mid C\in\C\text{ and }i\in\mathbb{Z}\}$, and ${\langle\C\rangle}_{r}^{\T}={\left\langle{\langle\C\rangle}_{r-1}^{\T}\ast{\langle\C\rangle}_{1}^{\T}\right\rangle}_{1}^{\T}$ for $r>1$.
The \em{Rouquier dimension} of $\T$, denoted by $\dim\T$, is defined as the infimum of integers $n\ge 0$ such that $\T={\langle G\rangle}_{n+1}^{\T}$ for some object $G\in\T$.
\item
Let $\C$ be a subcategory of $\A$.
Set ${|\C|}_{0}^{\A}=0$.
We denote by ${|\C|}_{1}^{\A}=\add_{\A}\C$ the additive closure of $\C$ in $\A$.
For subcategories $\C,\D$ of $\A$, we denote by $\C\ast\D$ the subcategory of $\A$ consisting of objects $E$ such that there exists an exact sequence
$0\to C\to E\to D\to 0$
in $\A$ with $C\in\C$ and $D\in\D$.
For an integer $r>1$, we inductively define ${|\C|}_{r}^{\A}={\left|{|\C|}_{r-1}^{\A}\ast{|\C|}_{1}^{\A}\right|}_{1}^{\A}$.
When $\A=\mod R$, we simply write ${|\X|}_{r}^{R}$ for ${|\X|}_{r}^{\mod R}$.
For a subcategory $\X$ of $\A$, the {\em size} of $\X$, denoted by $\size\X$, is defined as the infimum of integers $n\ge 0$ such that $\X\subseteq {|G|}_{n+1}^{\A}$ for some object $G\in\A$.
The {\em rank} of $\X$, denoted by $\rank\X$, is defined as the infimum of integers $n\ge 0$ such that $\X={|G|}_{n+1}^{\A}$ for some object $G\in\X$.
\end{enumerate}
\end{dfn}
\begin{rem}\label{rem_rank}
Let $\T$ be a triangulated category.
\begin{enumerate}[\rm(1)]
\item
An invariant closely related to the rank of a triangulated category is the Rouquier dimension \cite{Rou}.  
In the definition of the Rouquier dimension, one allows shifts at each step.
This is different from the rank defined above, where shifts are not included in the operation ${|-|}_{1}^{\T}$. 
In particular, one has $\dim \T\leq \rank \T$.
\item
The same definition of rank can also be made for exact categories. 
Indeed, if $\mathcal{E}$ is an exact category, then one defines $\C\ast\D$ for subcategories $\C,\D$ of $\mathcal{E}$ to be the subcategory consisting of objects $E$ admitting a conflation $C\to E\to D$ in $\mathcal{E}$ with $C\in\C$ and $D\in\D$, and then defines ${|\C|}_{r}^{\mathcal{E}}$ inductively as above.
With this definition, if $\X$ is a subcategory of an abelian category $\A$ which is closed under extensions and direct summands, and if $\X$ is regarded as an exact category with the exact structure induced from $\A$, then the rank of the exact category $\X$ coincides with the (Dao--Takahashi) rank of $\X$ as a subcategory of $\A$.
\end{enumerate}
\end{rem}
\begin{rem}\label{rem_basic_rank} 
Let $\T$ be a triangulated category and $\C,\D$ subcategories of $\T$. 
We record some elementary properties of the operation ${|-|}_{n}^{\T}$ that will be used repeatedly. 
By the associativity of $\ast$ and its compatibility with shifts and finite direct sums, for all positive integers $a,b$ and all $i\in\mathbb{Z}$, one has ${\left|{|\C|}_{a}^{\T}\ast{|\C|}_{b}^{\T}\right|}_{1}^{\T}\subseteq{|\C|}_{a+b}^{\T}$ and $({|\C|}_{b}^{\T})[i]={|\C[i]|}_{b}^{\T}$; see \cite[Lemma 2.3]{T2023c}. 
Consequently, if $\C\subseteq{|\D|}_{a}^{\T}$, then ${|\C|}_{b}^{\T}\subseteq{|\D|}_{ab}^{\T}$.
In particular, we have ${\left|{|\D|}_{a}^{\T}\right|}_{b}^{\T}\subseteq{|\D|}_{ab}^{\T}$. 
\end{rem} 
We begin with some elementary examples in which the rank can be observed explicitly.
\begin{ex}\label{eg_rank1}
\begin{enumerate}[\rm(1)]
\item
Let $(S,\n)$ be a discrete valuation ring with $\n=(x)$.
Consider the ring $R=S/(x^n)$, where $n\ge1$.
The structure theorem for finitely generated modules over a principal ideal domain implies that $\mod R=\add\{R/(x^i)\mid1\le i\le n\}$.
Hence the category $\mod R$ is of finite type, and we conclude that $\rank(\mod R)=0$.
\item
Let $(S,\n)$ be a two-dimensional regular local ring with $\n=(x,y)$.
Consider the ring $R=S/(x^2,y^n)$, where $n\ge2$.
Applying (1) to the discrete valuation ring $S/(x)$, we have $\mod R/(x) = \add_{R}\{R/(x,y^i)\mid1\le i\le n\}$.
For every $M\in\mod R$, there exists an exact sequence $0\to (0:_{M}x)\to M\to xM\to0$.
Hence we have $\mod R={|\mod R/(x)|}_{2}^{R}={|\bigoplus_{i=1}^{n}R/(x,y^i)|}_{2}^{R}$.
This implies that $\rank(\mod R)\le1$.
Since $R$ is not a principal ideal ring, the category $\mod R$ is not of finite type by \cite[Theorem 3.3]{LW}.
Therefore, one has $\rank(\mod R)=1$.
\item
Let $R$ be a singular artinian local ring.
By \cite[Lemma 3.2]{M8}, one has $\Omega\mod R\subseteq {|R\oplus k|}_{\ell\ell(R)-1}^{R}$, where $\ell\ell(R)=\inf\{n\geq1\mid \m^{n}=0\}$ denotes the Loewy length of $R$.
If, in addition, $R$ is Gorenstein, then $\mod R={|R\oplus k|}_{\ell\ell(R)-1}^{R}$.
This implies that $\rank(\mod R)\leq \ell\ell(R)-2$.
If, moreover, $R$ is a complete intersection, then by \cite[Corollary 5.10]{BIKO2010} one has $\edim R-1\leq\dim\ds(R)\leq\rank(\mod R)\leq\ell\ell(R)-2$.
Hence, if $\edim R=\ell\ell(R)-1$, then we obtain $\rank(\mod R)=\edim R-1=\ell\ell(R)-2$.

For example, consider the ring $R=S/(x_{1}^{2},\ldots,x_{n}^{2})$, where $S$ is an $n$-dimensional regular local ring with a regular system of parameters $x_{1},\ldots,x_{n}$.
Then $\edim R=n$ and $\ell\ell(R)=n+1$, so $\edim R=n=\ell\ell(R)-1$. 
Thus one has $\rank(\mod R)=n-1$.
\end{enumerate}
\end{ex}
We observe that $\db(R)$ does not have finite rank unless $R$ is the zero ring.
\begin{rem}\label{rank_db}
Let $R$ be a commutative noetherian ring. If $R\neq0$, then
one has $\db(R)\neq\bigcup_{n\geq 0}{|G|}_{n}^{\db(R)}$ for every $G\in\db(R)$. In particular, the ring $R$ is zero if and only if $\rank\db(R)$ is finite.
We prove the first assertion. For integers $a\le b$, set $\X(a,b)=\{X\in\db(R)\mid \H^{i}X=0\text{ for all } i<a \text{ and all } i>b\}$. 
Then $\X(a,b)$ is closed under direct summands and extensions.
Moreover, if $R\neq 0$, then $\X(a,b)$ is a proper subcategory of $\db(R)$.
Let $G$ be a nonzero object in $\db(R)$.
Then $G\in \X(\inf G,\sup G)$, and the closure properties above imply $\bigcup_{n\geq 0}{|G|}_{n}^{\db(R)}\subseteq \X(\inf G,\sup G)\subsetneq \db(R)$.
This proves the assertion.
\end{rem}
Following \cite{T2023}, we denote by $\c(R)$ the subcategory of $\mod R$ consisting of modules $M$ such that the inequality $\depth M_{\p} \ge \depth R_{\p}$ holds for all $\p \in \Spec R$.
If $R$ is Cohen--Macaulay, then $\c(R)$ coincides with the category $\CM(R)$ of maximal Cohen--Macaulay $R$-modules.
If $R$ is Gorenstein, then $\CM(R)$ is a Frobenius category, and its stable category $\underline{\CM}(R)$ admits a triangulated structure whose shift functor is given by the cosyzygy functor $\Omega^{-1}(-)$; see \cite[Chapter I]{Hap}.
Note that, if $\dim R=d<\infty$, then $\Omega^{d}(\mod R)$ is contained in $\c(R)$, where $\Omega^{d}(\mod R)$ denotes the subcategory of $\mod R$ consisting of modules $M$ for which there exists an exact sequence $0 \to M \to F_{d-1} \to \cdots \to F_0 \to N \to 0$ with each $F_i \in \proj R$ and $N \in \mod R$.
When $R$ is local, for $M \in \mod R$ we denote by $\Omega^n M$ the $n$-th syzygy of $M$ in its minimal free resolution.

The following proposition shows that, when $R$ is Gorenstein, the rank of $\ds(R)$ coincides with the (Dao--Takahashi) rank of $\CM(R)$, and that these ranks are finite under a mild assumption.
\begin{prop}\label{frob}
\begin{enumerate}[\rm(1)]
\item
Let $\mathcal{F}$ be a Frobenius category. 
Let $G$ be an object in $\mathcal{F}$ and $n$ a positive integer.
Then one has $\underline{\mathcal{F}}={|G|}_{n}^{\underline{\mathcal{F}}}$ if and only if $\mathcal{F}={|\{G\}\cup\proj\mathcal{F}|}_{n}^{\mathcal{F}}$.
\item
Let $R$ be a Gorenstein ring of finite Krull dimension.
Then the equalities
$\rank\ds(R)=\rank\underline{\CM}(R)=\rank\CM(R)$
hold.
If, moreover, $R$ is excellent, then these values are finite.
\end{enumerate}
\end{prop}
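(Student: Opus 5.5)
For part (1), I would compare the two filtrations step by step, using induction on $n$. I would show that for every $n\ge1$, an object $X\in\mathcal{F}$ lies in ${|G|}_{n}^{\underline{\mathcal{F}}}$ exactly when $X\in{|\{G\}\cup\proj\mathcal{F}|}_{n}^{\mathcal{F}}$. The case $n=1$ is immediate. In $\underline{\mathcal{F}}$, $X$ is a summand of $G^{\oplus m}$ if and only if $X\oplus P\cong Y$ in $\mathcal{F}$ for some projective $P$ and some summand $Y$ of $G^{\oplus m}\oplus Q$ with $Q$ projective. This uses the standard fact that stable isomorphism is isomorphism up to projective summands.

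For the inductive step in the direction from $\mathcal{F}$ to $\underline{\mathcal{F}}$, I would start from a conflation $0\to A\to E\to B\to0$ with $A$ in the $(n-1)$-st stage and $B$ in the first stage. Every conflation in a Frobenius category yields an exact triangle $A\to E\to B\to\Omega^{-1}A$ in $\underline{\mathcal{F}}$. Taking summands commutes with passing to the stable category, so membership in ${|G|}_{n}^{\underline{\mathcal{F}}}$ follows.

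For the converse, I would take a triangle $A\to E\to B\to A[1]$ in $\underline{\mathcal{F}}$. By construction of the triangulated structure, it is isomorphic to a standard triangle coming from a conflation $0\to A\to E'\to B\to0$, where $E'\cong E$ in $\underline{\mathcal{F}}$. I would arrange this by replacing $A\to E$ with $A\to E\oplus I(A)$, where $I(A)$ is an injective hull, which makes it an inflation. Then $E\oplus P\cong E'\oplus P'$ in $\mathcal{F}$ for some projectives $P,P'$. Projectives belong to the first stage, so adding and removing projective summands keeps us inside ${|\{G\}\cup\proj\mathcal{F}|}_{n}^{\mathcal{F}}$, via a split conflation with $P$ absorbed into the last term. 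The main care needed is here: checking that the standard-triangle representative can be chosen with the same end terms $A$ and $B$, up to projective summands, compatibly with the induction hypothesis.

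For part (2), Buchweitz's theorem gives a triangle equivalence $\underline{\CM}(R)\simeq\ds(R)$, since $R$ is Gorenstein of finite Krull dimension. Rank is invariant under triangle equivalence, so $\rank\ds(R)=\rank\underline{\CM}(R)$.

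Next I would apply (1) to the Frobenius category $\CM(R)$, whose projective objects are $\proj R$. One subtlety is that $\proj R=\add R$ only when $R$ is local or $\proj R$ has a generator; in general $\proj R=\add R$ holds for any commutative noetherian ring, so this is fine. Since $R\in\CM(R)$, we get $\underline{\CM}(R)={|G|}_{n}$ if and only if $\CM(R)={|G\oplus R|}_{n}^{R}$. In the abelian rank definition the generator is required to lie in $\CM(R)$, and $G\oplus R$ does. Conversely, a generator $H$ of $\CM(R)$ serves for $\underline{\CM}(R)$, because in the stable category $R\cong 0$. Conflations in $\CM(R)$ are exactly short exact sequences in $\mod R$ with all terms in $\CM(R)$. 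Also ${|\cdot|}^{R}_{n}$ of an object of $\CM(R)$ stays inside $\CM(R)$, because $\CM(R)$ is closed under extensions and summands. By Remark~\ref{rem_rank}(2), the two notions therefore agree, giving $\rank\underline{\CM}(R)=\rank\CM(R)$.

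For finiteness when $R$ is excellent, Aoki's theorem gives $\dim\db(R)<\infty$, hence $\dim\ds(R)<\infty$, say $\ds(R)={\langle G\rangle}_{m}$. The task is to remove the shifts. I would use Buchweitz's equivalence so that $\ds(R)$ is $\underline{\CM}(R)$, where $[1]=\Omega^{-1}$. Every object of ${\langle G\rangle}_{m}$ is built from shifts $G[i]$, and here I expect the main obstacle.

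My first attempt would be to replace $G$ by $G'=\bigoplus_{|i|\le d}\Omega^{i}G$ and show that only boundedly many shifts are needed. However, a given $X$ may require arbitrary shifts, and those cannot be bounded directly. So instead I would use a different generator: the strong generation result for $\CM(R)$ or $\mod R$ over excellent rings. Aoki, or Iyengar--Takahashi, gives $\mod R={|\Omega^{d}\cdots|}$ type bounds, namely that some $G$ satisfies $\mod R={|G|}_{s}^{R}$ up to syzygies, i.e.\ $\Omega^{d}\mod R\subseteq{|G|}_{s}^{R}$. Every maximal Cohen--Macaulay module over a Gorenstein ring is a $d$-th syzygy. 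Hence $\CM(R)\subseteq{|G|}_{s}^{R}$, and $\size\CM(R)<\infty$ follows. To pass from finite size to finite rank, I would replace $G$ by $\Omega^{d}G\oplus R$ and take $d$-th syzygies of the defining exact sequences, using the horseshoe lemma and the fact that syzygy sequences stay exact up to free summands. This gives $\CM(R)={|\Omega^{d}G\oplus R|}_{s}^{R}$, so $\rank\CM(R)<\infty$.
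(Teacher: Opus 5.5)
Your proposal is correct and follows essentially the same route as the paper: part (1) through the correspondence between conflations in $\mathcal{F}$ and exact triangles in $\underline{\mathcal{F}}$, the equalities in (2) through Buchweitz's equivalence together with (1), and finiteness through a bound $\Omega^{t}(\mod R)\subseteq{|G|}_{s}^{R}$ followed by taking $d$-th syzygies. There are two small corrections. First, for arbitrary excellent $R$ the bound you need is \cite[Corollary 3.12]{DLT}: Aoki's theorem alone only concerns $\db(R)$, and Iyengar--Takahashi cover only special classes of rings. Second, your generator $\Omega^{d}G\oplus R$ works because each $M\in\CM(R)$ is, up to free summands, $\Omega^{d}$ of the maximal Cohen--Macaulay module $\Omega^{-d}M\in{|G|}_{s}^{R}$; it is not enough that $M$ is a $d$-th syzygy of some arbitrary module. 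This is the same cosyzygy fact the paper uses to arrive at the generator $\Omega^{-d}\Omega^{d}G\oplus R$.
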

\begin{proof}
(1) The assertion follows from the standard construction of the stable category of a Frobenius category. For any conflation $0\to X\to Y\to Z\to 0$ in $\mathcal{F}$, the induced sequence $X\to Y\to Z\to\Omega^{-1}X$ in $\underline{\mathcal{F}}$ is an exact triangle. Conversely every exact triangle $X\to Y\to Z\to\Omega^{-1}X$ in $\underline{\mathcal{F}}$ is isomorphic to an exact triangle induced by a conflation $0\to X'\to Y'\to Z'\to 0$ in $\mathcal{F}$.
(2) The first equality follows from Buchweitz's triangle equivalence \cite[Theorem 4.4.1]{B}, and the second follows from (1).
By \cite[Corollary 3.12]{DLT}, there exist integers $s,n \ge 0$ and an $R$-module $G$ such that $\Omega^{s}(\mod R) \subseteq {|G|}_{n}^{R}$.
Taking $s$ large enough, we may assume that $s \ge d$, where $d=\dim R$.
Since $R$ is Gorenstein, we have $\Omega^{s}(\mod R)=\CM(R)$.
Thus $\CM(R) \subseteq {|G|}_{n}^{R}$.
Now let $M \in \CM(R)$. 
Then $M \in {|G|}_{n}^{R}$, and hence $\Omega^{d}M \in {|\Omega^{d}G\oplus R|}_{n}^{R}$.
As $R$ is Gorenstein, the $d$-th cosyzygy functor is quasi-inverse to the $d$-th syzygy functor on $\underline{\CM}(R)$, and therefore $M \in {|\Omega^{-d}\Omega^{d}G\oplus R|}_{n}^{R}$.
Since $\Omega^{-d}\Omega^{d}G$ is maximal Cohen--Macaulay, it follows that $\CM(R)={|\Omega^{-d}\Omega^{d}G\oplus R|}_{n}^{R}$.
Thus $\rank \CM(R)$ is finite.
\end{proof}
We next consider sufficient conditions for $\ds(R)$ to have finite rank without assuming that $R$ is Gorenstein. 
We first note that if ${\syz}^{n}(\mod R)$ has finite representation type for some integer $n\ge0$, then the same argument as in \cite[Proposition 2.5]{Chen2011} shows that $\ds(R)$ admits an additive generator; in particular, $\rank\ds(R)=0$.
The following lemma is easily proved by induction on $n$.
\begin{lem}\label{lem_shiftclosed}
Let $\S$ be a subcategory of a triangulated category $\T$. 
If $\S$ is closed under shifts, then ${\langle\S\rangle}_{n}={|\S|}_{n}$ for every integer $n\ge0$. 
In particular, for every subcategory $\X$ of $\T$, one has ${\langle\X\rangle}_{n}={|{\langle\X\rangle}_{1}|}_{n}$ for every integer $n\ge0$.
\end{lem}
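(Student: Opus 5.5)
We will prove both assertions by induction on $n$, using the definitions in Definition \ref{def_rank} directly. Throughout we use the convention that ${\langle\S\rangle}_{n}$ and ${|\S|}_{n}$ are taken in $\T$.

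For the first assertion we check the base cases. For $n=0$ both sides are $0$. For $n=1$, ${\langle\S\rangle}_{1}=\add_{\T}\{S[i]\mid S\in\S,\ i\in\mathbb{Z}\}$. Since $\S$ is closed under shifts, the set $\{S[i]\}$ equals $\S$, so ${\langle\S\rangle}_{1}=\add_{\T}\S={|\S|}_{1}$.

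For the inductive step, suppose $n>1$ and ${\langle\S\rangle}_{n-1}={|\S|}_{n-1}$. By definition,
\[
{\langle\S\rangle}_{n}={\left\langle{\langle\S\rangle}_{n-1}\ast{\langle\S\rangle}_{1}\right\rangle}_{1}={\left\langle{|\S|}_{n-1}\ast{|\S|}_{1}\right\rangle}_{1}.
\]
It therefore suffices to show that $\C:={|\S|}_{n-1}\ast{|\S|}_{1}$ is closed under shifts. Then ${\langle\C\rangle}_{1}=\add_{\T}\C={|\C|}_{1}={|\S|}_{n}$.

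Closure under shifts follows from two facts. First, $\add_{\T}$ of a shift-closed subcategory is shift-closed, because $[1]$ is an additive autoequivalence and so preserves finite sums and summands. By induction this makes every ${|\S|}_{m}$ shift-closed; this is the identity $({|\S|}_{m})[i]={|\S[i]|}_{m}={|\S|}_{m}$ recorded in Remark \ref{rem_basic_rank}. Second, if $C\to E\to D\to C[1]$ is an exact triangle, then rotating by $[i]$ gives an exact triangle $C[i]\to E[i]\to D[i]\to C[i+1]$, up to the sign convention on the maps. Hence $(\C'\ast\D')[i]=\C'[i]\ast\D'[i]$, and $\C$ is shift-closed.

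The second assertion follows by applying the first to $\S={\langle\X\rangle}_{1}$, which is closed under shifts. This gives ${|{\langle\X\rangle}_{1}|}_{n}={\langle{\langle\X\rangle}_{1}\rangle}_{n}$. It remains to show ${\langle{\langle\X\rangle}_{1}\rangle}_{n}={\langle\X\rangle}_{n}$, again by induction on $n$. The case $n=1$ is ${\langle{\langle\X\rangle}_{1}\rangle}_{1}=\add(\add\text{ of shifts of }\X)={\langle\X\rangle}_{1}$, using that $\add$ is idempotent. The inductive step is immediate, since both sides are built from the same $(n-1)$-st and first stages by the same operation.

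The only point requiring any care is the shift-stability of the $\ast$-product and of $\add$. This is routine, so no real obstacle is expected.
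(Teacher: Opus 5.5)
Your proof is correct and takes the same approach as the paper, which only remarks that the lemma ``is easily proved by induction on $n$.'' Your argument supplies that induction: it shows ${|\S|}_{n-1}\ast{|\S|}_{1}$ is shift-closed, so ${\langle-\rangle}_{1}$ reduces to $\add$ on it, and it reduces the second assertion to ${\langle{\langle\X\rangle}_{1}\rangle}_{n}={\langle\X\rangle}_{n}$.
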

The shift-closedness of the additive closure of a strong generator provides the following sufficient conditions for a triangulated category to have finite rank.
\begin{lem}\label{lem_addshift}
Let $\T$ be a triangulated category.
\begin{enumerate}[\rm(1)]
\item 
Suppose that $G_{+}$ is a strong generator of $\T$ and $G_{-}$ is an object of $\T$. 
Assume that there exist integers $m,b,N>0$ such that $G_{+}[m]\in\add G_{+}$ and $G_{+}[-i]\in{|G_{-}|}_{b}$ for every $i\ge N$. Then $\T$ has finite rank.
\item 
Suppose that $G_{+}$ and $G_{-}$ are strong generators of $\T$. If there exist integers $m,n>0$ such that $G_{+}[m]\in\add G_{+}$ and $G_{-}[-n]\in\add G_{-}$, then $\T$ has finite rank.
\end{enumerate}
\end{lem}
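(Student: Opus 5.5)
Both parts reduce to the same idea: find an object $H$ whose additive closure is closed under all shifts. Then, by Lemma \ref{lem_shiftclosed}, the Rouquier-type building with shifts from $H$ coincides with building without shifts from $\add H$. Since $G_{+}$ is a strong generator, there is $r>0$ with $\T={\langle G_{+}\rangle}_{r}$. So it suffices to exhibit a single object $H$ and an integer $c$ with $G_{+}[i]\in{|H|}_{c}$ for every $i\in\mathbb{Z}$. Then ${\langle G_{+}\rangle}_{1}\subseteq{|H|}_{c}$, and by Lemma \ref{lem_shiftclosed} and Remark \ref{rem_basic_rank},
\[
\T={\langle G_{+}\rangle}_{r}={|{\langle G_{+}\rangle}_{1}|}_{r}\subseteq{|H|}_{cr},
\]
so $\rank\T\le cr-1$.

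For (1), first treat positive shifts. Iterating $G_{+}[m]\in\add G_{+}$ gives $G_{+}[km]\in\add G_{+}$ for all $k\ge 0$, since $\add$ commutes with shifts. Hence every $G_{+}[i]$ with $i\ge 0$ lies in $\add\bigl(\bigoplus_{j=0}^{m-1}G_{+}[j]\bigr)$: write $i=km+j$ with $0\le j<m$. Next, for $i\ge N$ the hypothesis gives $G_{+}[-i]\in{|G_{-}|}_{b}$ directly. The finitely many remaining shifts $G_{+}[-i]$ with $0<i<N$ can simply be added as summands. So I take
\[
H=\bigoplus_{j=0}^{m-1}G_{+}[j]\ \oplus\ \bigoplus_{0<i<N}G_{+}[-i]\ \oplus\ G_{-},
\]
and then $G_{+}[i]\in{|H|}_{b}$ for all $i$, using $b\ge1$ and monotonicity of ${|-|}_{b}$. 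This finishes (1) with $c=b$.

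For (2), I reduce to (1). The same periodicity argument applied to $G_{-}$ with $G_{-}[-n]\in\add G_{-}$ shows that $G_{-}[-i]\in\add\bigl(\bigoplus_{j=0}^{n-1}G_{-}[-j]\bigr)$ for all $i\ge0$. Call this last object $G'_{-}$. Since $G_{-}$ is a strong generator, $\T={\langle G_{-}\rangle}_{s}$ for some $s$. The remaining point, which I expect to be the main obstacle, is to control the \emph{negative} shifts of $G_{+}$ uniformly by a bounded number of cones without shifts. A priori $G_{+}$ is built from $G_{-}[\ell]$ with arbitrary $\ell$ of both signs, and positive shifts of $G_{-}$ are not controlled by $G'_{-}$.

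To get around this, I would use that only finitely many shifts appear. Write $G_{+}\in{|\bigoplus_{\ell\in L}G_{-}[\ell]|}_{s}$ for a finite set $L$; this uses Lemma \ref{lem_shiftclosed} together with the fact that an object of ${\langle G_{-}\rangle}_{s}$ involves finitely many shifts. Let $M=\max L$. Then for $i\ge N:=\max(M,0)$, every $\ell-i\le 0$, so each $G_{-}[\ell-i]\in\add G'_{-}$. Hence
\[
G_{+}[-i]\in{|\textstyle\bigoplus_{\ell\in L}G_{-}[\ell-i]|}_{s}\subseteq{|G'_{-}|}_{s}.
\]
This is exactly the hypothesis of (1) with $G'_{-}$ in place of $G_{-}$ and $b=s$, so (1) gives finite rank.
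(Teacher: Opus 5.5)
Your proof is correct and follows the paper's argument essentially step for step. In (1) you use the same object $H$ to put every shift of $G_{+}$ into ${|H|}_{b}$, which gives $\T={|H|}_{br}$ via Lemma \ref{lem_shiftclosed}. In (2) the paper likewise replaces $G_{-}$ by $\bigoplus_{i=0}^{n-1}G_{-}[-i]$ and takes $N$ to be the largest shift occurring when $G_{+}$ is written in ${\langle G_{-}\rangle}_{s}$; the paper also replaces $G_{+}$ to force $m=1$, which, as you implicitly note, is unnecessary.

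The only nit: part (1) requires $N>0$, so you should take $N=\max(M,1)$ rather than $\max(M,0)$, exactly as the paper does.
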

\begin{proof}
(1) Choose an integer $a>0$ such that $\T={\langle G_{+}\rangle}_{a}$, and put $H=\bigl(\bigoplus_{r=0}^{m-1}G_{+}[r]\bigr)\oplus\bigl(\bigoplus_{i=1}^{N-1}G_{+}[-i]\bigr)\oplus G_{-}$. 
Then we have $G_{+}[j]\in\add H$ for every $j\ge0$. 
We also have $G_{+}[-i]\in\add H$ for $1\le i<N$, whereas $G_{+}[-i]\in{|G_{-}|}_{b}\subseteq{|H|}_{b}$ for every $i\ge N$. 
Thus every shift of $G_{+}$ belongs to ${|H|}_{b}$, and therefore ${\langle G_{+}\rangle}_{1}\subseteq{|H|}_{b}$. 
By Lemma \ref{lem_shiftclosed}, $\T={\langle G_{+}\rangle}_{a}={|{\langle G_{+}\rangle}_{1}|}_{a}\subseteq{|{|H|}_{b}|}_{a}\subseteq{|H|}_{ab}$. 
Hence $\T$ has finite rank.
(2) Replacing $G_{+}$ by $\bigoplus_{i=0}^{m-1}G_{+}[i]$ and $G_{-}$ by $\bigoplus_{i=0}^{n-1}G_{-}[-i]$, respectively, we may assume that $m=n=1$. 
Choose an integer $b>0$ such that $\T={\langle G_{-}\rangle}_{b}$. 
Since $G_{+}\in{\langle G_{-}\rangle}_{b}$, there exist integers $c_{1},\ldots,c_{u}$ such that $G_{+}\in{|\bigoplus_{j=1}^{u}G_{-}[c_{j}]|}_{b}$. 
Set $N=\max\{1,c_{1},\ldots,c_{u}\}$. 
For every $i\ge N$, we have $G_{+}[-i]\in{|G_{-}|}_{b}$. 
Thus (1) applies and shows that $\T$ has finite rank.
\end{proof}
In view of \cite[Theorem 4.16]{DLMO}, we next consider conditions for $\ds(R)$ to have finite rank when $R$ is a local ring with an isolated singularity and $\ds(R)$ admits a strong generator. 
A stronger condition related to that in Proposition \ref{prop_isolsing}(2) is studied in \cite[Section 2]{CK}.
\begin{prop}\label{prop_isolsing}
Let $(R,\m,k)$ be a local ring with an isolated singularity.
\begin{enumerate}[\rm(1)]
\item
Assume that $\ds(R)$ admits a strong generator and $R$ is Golod. 
Then $\ds(R)$ has finite rank.
\item
Assume that $R$ is quasi-excellent, and there exist integers $a>b\ge0$ such that ${\syz}^{b}k\in\add{\syz}^{a}k$ (e.g., $R$ is Burch or $\m$ is quasi-decomposable). 
Then $\ds(R)$ has finite rank.
\end{enumerate}
\end{prop}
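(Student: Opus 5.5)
In both parts the plan is to reduce to Lemma \ref{lem_addshift}. That means finding objects of $\ds(R)$ whose additive closures are stable under some positive shift, respectively some negative shift, and which generate strongly. Throughout I use the standard identification of $[-1]$ on objects $\Omega M$ with passing to syzygies: for a module $M$, one has $M[-n]\cong \Omega^{n}M$ in $\ds(R)$. Since $R$ has an isolated singularity, every syzygy $\Omega^{n}M$ is locally free on the punctured spectrum. This is what lets the residue field control things.

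\textbf{Part (2).} I will first show that $\ds(R)$ has a strong generator. $R$ is quasi-excellent with an isolated singularity, so $\m^{t}\Ext$-type annihilation results (Iyengar--Takahashi, or \cite[Corollary 3.12]{DLT}) give $\Omega^{s}(\mod R)\subseteq{|G|}_{n}^{R}$. Since the singular locus is $\{\m\}$, one can take $G=\Omega^{s}k$ up to a free summand. Hence $\ds(R)={\langle k\rangle}_{c}$ for some $c$, and $k$ is a strong generator.

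Next I set $G_{-}=\bigoplus_{i=b}^{a-1}\Omega^{i}k$. The hypothesis $\Omega^{b}k\in\add\Omega^{a}k$ gives, by applying $\Omega^{j}$, that $\Omega^{b+j}k\in\add\Omega^{a+j}k$. Conversely, every $\Omega^{i}k$ with $i\ge b$ lies in $\add G_{-}$: periodicity modulo $a-b$ together with taking summands should give this. I need to check the direction carefully. The hypothesis expresses $\Omega^{b}k$ as a summand of $\Omega^{a}k$, so it is the lower syzygies that lie in the higher ones. This gives $G_{-}[1]\in\add G_{-}$, i.e. closure under a positive shift. That is the ``$G_{+}$'' role.

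For the other direction I take $G_{-}'=\Omega^{N}k$ for large $N$. Negative shifts $G[-i]=\Omega^{i}G$ of $G_{+}=\bigoplus_{i=b}^{a-1}\Omega^{i}k$ are $\Omega^{i+j}k$. By Auslander--Buchsbaum-type behaviour these are summands of... not obviously in $\add$ of a fixed object. Instead I will use Lemma \ref{lem_addshift}(1), with $G_{-}=k$ itself. I need $G_{+}[-i]\in{|k|}_{b'}$ for $i\ge N$, i.e. $\Omega^{j}k\in{|k|}_{b'}$ in $\ds(R)$ with $b'$ uniform. The main obstacle is exactly this uniform bound. I would get it by showing that high syzygies of $k$ have finite length filtrations in $\ds(R)$ by copies of $k$. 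Using $\Omega^{j}k\in{|\Omega^{s}k\oplus R|}$ does not help, so instead I will try to exploit the conflations $0\to\m\Omega^{j-1}k\to \Omega^{j}k\cdots$. The point is that $\m^{t}$ annihilates $\underline{\Hom}$ from $\Omega^{j}k$ (isolated singularity plus quasi-excellence), so the ghost-lemma-style filtration $0\to \m^{i}X/\m^{i+1}X$ built inside $\ds(R)$ gives $X\in{|k|}_{t}$-type bounds up to one shift. I expect this step to be the crux.

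\textbf{Part (1).} For the Golod case the plan is the same, with $G_{+}$ a strong generator. Golodness gives, by Avramov's results, that $\Omega^{2}k$ or a high syzygy of $k$ decomposes so that every $\Omega^{i}k$ for $i\ge \edim R$ has $k$-like summands: $\Omega^{n+1}M$ maps onto $k$-sums, with $\m\Omega^{n}M$ a direct summand-type relation (Levin). I will use this to produce $G[m]\in\add G$ for some $m$, and then invoke Lemma \ref{lem_addshift}(1) as above.
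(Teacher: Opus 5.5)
\textbf{There is a genuine gap.} Your reduction to Lemma \ref{lem_addshift} is the right frame, but in both parts the decisive step is missing.

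\textbf{Part (2).} After your self-correction the positive direction is fine: $\Omega^{b}k\in\add\Omega^{a}k$ gives $k[-b]\in\add k[-a]$, i.e.\ $k[a-b]\in\add k$ in $\ds(R)$. Your intermediate claim that every $\Omega^{i}k$ with $i\ge b$ lies in $\add\bigoplus_{i=b}^{a-1}\Omega^{i}k$ is backwards and should be discarded. What you call the crux is a uniform bound $k[-i]\in{|G_{-}|}_{s}$ for all large $i$, and you never establish it. The $\m$-adic filtration idea is only a sketch, and it does not apply to $\Omega^{j}k$, which in general is not of finite length. You also insist on $G_{-}=k$, which Lemma \ref{lem_addshift}(1) does not require.

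The bound is already in your hands. The inclusion $\Omega^{N}(\mod R)\subseteq{|G|}_{s}^{R}$ from \cite[Corollary 3.12]{DLT}, which you quoted, gives $k[-i]\cong\Omega^{i}k\in{|G|}_{s}^{\ds(R)}$ for every $i\ge N$. This holds because $\mod R\to\ds(R)$ sends short exact sequences to exact triangles and kills free modules. So $G_{+}=k$, $m=a-b$, $G_{-}=G$ finishes (2); this is exactly the observation you dismissed as ``does not help''. Your claim that $G$ may be taken to be $\Omega^{s}k\oplus R$ is not justified by the cited result, and it is not needed. Strong generation of $k$ follows from $\ds(R)=\thick(k)$ for an isolated singularity \cite[Corollary 4.5]{T2014}, together with the existence of the strong generator $G$.

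\textbf{Part (1).} Quasi-excellence is not assumed here, so ``as above'' cannot mean the DLT bound. Your Golod sketch (Avramov, Levin, ``$k$-like summands'') never produces a concrete relation of the form $X[\pm m]\in\add X$. The missing ingredient is the decomposition $\Omega^{e+1}k\cong\bigoplus_{j=0}^{e-1}(\Omega^{j}k)^{\oplus h_{e-j}}$ for Golod rings \cite[Theorem 1.1]{CDEKPU}. Here $e=\edim R$ and $h_{i}=\dim_{k}\H_{i}(K)$ for the Koszul complex $K$ on a minimal generating set of $\m$. This decomposition controls both directions at once:
\begin{enumerate}[\rm(a)]
\item It gives $k[-e-1]\in\add\bigoplus_{j=0}^{e-1}k[-j]$, so $G_{-}=\bigoplus_{j=0}^{e}k[-j]$ satisfies $G_{-}[-1]\in\add G_{-}$.
\item Since $h_{e-t}\neq0$ for $t=\depth R$, $\Omega^{t}k$ is a summand of $\Omega^{e+1}k$, whence $k[e-t+1]\in\add k$.
\end{enumerate}
Both $k$ and $G_{-}$ are strong generators, again because $\ds(R)=\thick(k)$ and some strong generator exists. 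Hence Lemma \ref{lem_addshift}(2) applies, with $G_{+}=k$. A statement that high syzygies of $k$ merely have $k$ as a summand would give only the positive direction. The negative direction needs the fact that $\Omega^{e+1}k$ is built \emph{entirely} from lower syzygies of $k$.
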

\begin{proof}
If $R$ is regular, then $\ds(R)=0$, and there is nothing to prove. 
Thus we may assume that $R$ is singular.
(1) By assumption, $\ds(R)$ admits a strong generator, while the isolated singularity assumption yields $\ds(R)=\thick(k)$ by \cite[Corollary 4.5]{T2014}. 
Hence $k$ is a strong generator of $\ds(R)$.
Set $e=\edim R$ and $t=\depth R$. 
Let $K$ be the Koszul complex on a minimal system of generators of $\m$, and set $h_{i}=\dim_{k}\H_{i}(K)$. 
By \cite[Theorem 1.1]{CDEKPU}, there is an isomorphism ${\syz}^{e+1}k\cong\bigoplus_{j=0}^{e-1}({\syz}^{j}k)^{\oplus h_{e-j}}$.
This implies that $k[-e-1]\in\add(\bigoplus_{j=0}^{e-1}k[-j])$ in $\ds(R)$. 
Put $G_{+}=k$ and $G_{-}=\bigoplus_{j=0}^{e}k[-j]$. 
Both $G_{+}$ and $G_{-}$ are strong generators of $\ds(R)$, and the preceding decomposition gives $G_{-}[-1]\in\add G_{-}$. 
On the other hand, $\H_{e-t}(K)\ne0$, so $h_{e-t}>0$. Hence ${\syz}^{t}k$ is a direct summand of ${\syz}^{e+1}k$, and therefore $k[-t]$ is a direct summand of $k[-e-1]$ in $\ds(R)$. 
Shifting by $e+1$, we obtain $k[e-t+1]\in\add k$, that is, $G_{+}[e-t+1]\in\add G_{+}$. 
By Lemma \ref{lem_addshift}(2), the assertion follows.
(2) By \cite[Corollary 3.12]{DLT}, there exist an $R$-module $G$ and integers $N,s>0$ such that ${\syz}^{N}(\mod R)\subseteq{|G|}_{s}$. 
In particular, $G$ is a strong generator of $\ds(R)$. 
Thus, $k$ is also a strong generator of $\ds(R)$. 
For every $i\ge N$, one has ${\syz}^{i}k \in{|G|}_{s}$, and hence $k[-i]\in{|G|}_{s}$ in $\ds(R)$. 
On the other hand, the assumption ${\syz}^{b}k\in\add{\syz}^{a}k$ gives $k[-b]\in\add k[-a]$ in $\ds(R)$, and hence $k[a-b]\in\add k$. 
By Lemma \ref{lem_addshift}(1), the assertion follows.
Finally, if $R$ is a Burch ring of depth $t$, then ${\syz}^{t}k$ is a direct summand of ${\syz}^{t+2}k$ by \cite[Proposition 5.10]{DKT}; if $\m$ is quasi-decomposable, then ${\syz}^{t+1}k$ is a direct summand of ${\syz}^{t+2}k$ by \cite[Proposition 4.3(3)]{T2026}.
Thus both classes satisfy the assumption of (2).
\end{proof}
We end this section by considering a weaker generation property for singularity categories.
\begin{rem}\label{entropy} 
Let $\T$ be a triangulated category, and let $\Delta(\T)$ be as in \cite[Definition 2.9]{AIT}. 
It follows from \cite[Lemma 2.11]{AIT} that if $\T=\bigcup_{t>0}{|G|}_{t}^{\T}$ for some $G\in\T$, then $\Delta(\T)$ is bounded. 
\end{rem} 
Contrary to Remark \ref{rank_db}, it often happens that $\ds(R)=\bigcup_{n>0}{|G|}_{n}$, as shown below. 
We first need a preparatory lemma, which is recorded in \cite[Remark 4.5(2)]{AIT}, but we include a proof for the convenience of the reader. 
\begin{lem}\label{cmextlem} 
Let $R$ be a Cohen--Macaulay local ring admitting a canonical module. 
If $\mod R$ admits an extension generator in the sense of \cite{DLMO}, then $\CM(R)=\bigcup_{t>0}{|H|}_{t}$ for some $H\in\CM(R)$. 
\end{lem}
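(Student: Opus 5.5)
We need to know what an extension generator is in the sense of DLMO. As I recall, an object $G\in\mod R$ is an extension generator if $\mod R=\bigcup_{t>0}{|G|}_{t}^{R}$, that is, every finitely generated module is built from $G$ by finitely many extensions, finite direct sums and direct summands, without syzygies or cokernels. The plan is to transport this generation from $\mod R$ into $\CM(R)$ using the maximal Cohen--Macaulay approximation functor. Let $\omega$ be the canonical module. By Auslander--Buchweitz, every $M\in\mod R$ has an MCM approximation
\[0\to Y_M\to X_M\to M\to 0,\]
with $X_M\in\CM(R)$ and $Y_M$ of finite injective dimension, so $Y_M\in\add$-closure of finite $\omega$-resolutions. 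On the stable category $\underline{\CM}(R)$ modulo $\add\omega$, the assignment $M\mapsto X_M$ is functorial.

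First I will show that MCM approximation carries short exact sequences to extensions up to $\omega$-summands. Let $0\to A\to B\to C\to 0$ be exact in $\mod R$. The horseshoe-type construction of Auslander--Buchweitz produces an exact sequence $0\to X_A\to X'\to X_C\to 0$ of MCM modules with $X'$ an MCM approximation of $B$. Hence $X'\cong X_B\oplus W$ modulo summands in $\add\omega$. More precisely, $X'\oplus\omega^{a}\cong X_B\oplus\omega^{b}$, since two MCM approximations differ by $\add\omega$-summands. So $X_B$ is a direct summand of an extension of $X_C$ by $X_A\oplus\omega^{a}$, using that $\omega$ is injective in $\CM(R)$. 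A direct summand of $B$ gives a direct summand of $X_B$ up to $\omega$-summands.

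By induction on $t$, I then get: if $M\in{|G|}_{t}^{R}$, then $X_M\in{|X_G\oplus\omega|}_{t}^{R}$. The case $t=1$ holds by additivity. For the inductive step, apply the above to a sequence $0\to A\to E\to D\to 0$ with $A\in{|G|}_{t-1}$ and $D\in\add G$, absorb the $\omega$-terms into $\add(X_G\oplus\omega)$, and use Remark~\ref{rem_basic_rank}-type subadditivity.

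Finally, each $M\in\CM(R)$ satisfies $X_M\cong M$ up to $\omega$-summands, since $M$ is its own approximation. Hence $M\in{|X_G\oplus\omega|}_{t}^{R}$ for some $t$, and $H=X_G\oplus\omega\in\CM(R)$ works.

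The main obstacle is the first step. One must check that the extension produced lives in $\CM(R)$ and that $\omega$-summands can be absorbed at each stage without increasing the number of extensions. The cleanest way to handle this is to use that all the modules involved are MCM and that $\Ext^1(\CM,\omega)=0$.
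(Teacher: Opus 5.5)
Your argument starts from the wrong hypothesis. In \cite{DLMO}, the existence of an extension generator for $\mod R$ is a condition on high syzygies. One is given $G\in\mod R$ and $s\ge0$ with $\Omega^{s}(\mod R)\subseteq\bigcup_{t>0}{|G|}_{t}$, and this is why the paper's proof starts from $\Omega^{n}\CM(R)\subseteq\bigcup_{t>0}{|G|}_{t}$. Your reading $\mod R=\bigcup_{t>0}{|G|}_{t}$ already fails for a two-dimensional regular local ring. Take one of the infinitely many height-one primes $\p\notin\Ass G$. Then $G_{\p}$ is free over the discrete valuation ring $R_{\p}$, so localizing would put $\kappa(\p)=(R/\p)_{\p}$ in ${|G_{\p}|}_{t}^{R_{\p}}$, which consists of free modules. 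Impossible.

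Your approximation argument itself is sound. The horseshoe step works because $\Ext^{2}_R(X_C,Y_A)=0$, and $\omega$-summands can be absorbed because maps from MCM modules to modules of finite injective dimension factor through $\add\omega$. So you do prove the lemma when $\CM(R)\subseteq\bigcup_{t}{|G|}_{t}$, and there it is a legitimate alternative to the paper's device of passing to $\Omega^{d}G$. Under the actual hypothesis, however, you only know $\Omega^{n}M\in\bigcup_{t}{|G|}_{t}$ for $M\in\CM(R)$, and your functor cannot recover $M$ from this. The approximation of the MCM module $\Omega^{n}M$ is $\Omega^{n}M$ itself, and MCM modules need not be high syzygies. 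For example, over $R=k[x,y]/(x,y)^{2}$ one has $\Omega^{1}(\mod R)=\add\{R,k\}$, while $R/(x)\in\CM(R)=\mod R$.

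The missing idea is to build $M$ from a high syzygy of a different MCM module, using $\omega$ to climb back up. Take a free resolution $0\to N\to F_{d+n-1}\to\cdots\to F_{0}\to M^{\dagger}\to0$ of $M^{\dagger}=\Hom_R(M,\omega)$ and dualize it. Since $\Ext^{>0}_R(M^{\dagger},\omega)=0$ and $M^{\dagger\dagger}\cong M$, this gives an exact sequence $0\to M\to\omega^{\oplus b_{0}}\to\cdots\to\omega^{\oplus b_{d+n-1}}\to L\to0$ with $L=N^{\dagger}\in\CM(R)$. Then \cite[Lemma 5.8]{DT2014} yields $M\in{|\Omega^{d+n}L\oplus W|}_{d+n+1}$, where $W=\bigoplus_{j=0}^{d+n-1}\Omega^{j}\omega$. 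Since $\Omega^{n}L\in\bigcup_{t}{|G|}_{t}$, taking $d$-th syzygies gives $\Omega^{d+n}L\in\bigcup_{t}{|\Omega^{d}G|}_{t}$. So $H=\Omega^{d}G\oplus W\in\CM(R)$ works. Without a step of this kind, your proof only covers a hypothesis much stronger than the one in the statement.
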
 
\begin{proof} 
Set $d=\dim R$. 
Passing to a sufficiently high syzygy, we may assume that $\syz^n\CM(R)\subseteq\bigcup_{t>0}{|G|}_{t}$ for some $n\ge1$ and some $G\in\mod R$. 
Let $\omega$ be a canonical module of $R$, and for $M\in\mod R$, set $M^{\dagger}=\Hom_R(M,\omega)$. 
Let $M\in\CM(R)$. There is an exact sequence $0\to N\to F_{d+n-1}\to\cdots\to F_0\to M^{\dagger}\to0$, where the $F_i$ are free modules and $N\in\CM(R)$. 
Dualizing by $\omega$ and using $\Ext_R^{>0}(M^{\dagger},\omega)=0$ and $M^{\dagger\dagger}\cong M$, we obtain an exact sequence $0\to M\to\omega^{\oplus b_0}\to\cdots\to\omega^{\oplus b_{d+n-1}}\to L\to0$, where $L\cong N^{\dagger}\in\CM(R)$. 
By \cite[Lemma 5.8]{DT2014}, we have $M\in{|\syz^{d+n}L\oplus W|}_{d+n+1}$, where $W=\bigoplus_{j=0}^{d+n-1}\syz^j\omega\in\CM(R)$. 
Since $\syz^{d}L\in\CM(R)$, we get $M\in{|\syz^{n+d}L\oplus W|}_{d+n+1}\subseteq\bigcup_{t>0}{|\syz^{d}G\oplus W|}_{t}$. 
As $M\in\CM(R)$ was arbitrary, we conclude that $\CM(R)\subseteq\bigcup_{t>0}{|\syz^{d}G\oplus W|}_{t}$, and the reverse inclusion is clear since $\syz^{d}G\oplus W\in\CM(R)$. 
\end{proof}
\begin{prop}\label{extdsg} 
Let $R$ be a Cohen--Macaulay local ring admitting a canonical module and locally Gorenstein on the punctured spectrum. 
If $\mod R$ admits an extension generator in the sense of \cite{DLMO}, then $\ds(R)=\bigcup_{n>0}{|H|}_{n}^{\ds(R)}$ for some $H\in\mod R$. 
\end{prop}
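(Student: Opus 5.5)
By Lemma \ref{cmextlem}, there is $H\in\CM(R)$ with $\CM(R)=\bigcup_{t>0}{|H|}_{t}^{R}$. The plan is to transport this to $\ds(R)$ using two facts: every object of $\ds(R)$ is, up to shift, a maximal Cohen--Macaulay module, and negative shifts correspond to syzygies. Set $d=\dim R$. For $X\in\ds(R)$, the standard truncation argument shows $X\cong M[s]$ in $\ds(R)$ for some $M\in\mod R$ and $s\in\ZZ$. Replacing $M$ by $\syz^{d+j}M$ shifts by $-(d+j)$, so we may arrange $X\cong N[n]$ with $N\in\CM(R)$ and $n\ge0$. The exact functor $\mod R\to\ds(R)$ sends short exact sequences to triangles, so ${|H|}_{t}^{R}$ maps into ${|H|}_{t}^{\ds(R)}$. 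It therefore suffices to show that every positive shift $N[n]$ of a maximal Cohen--Macaulay module $N$ is isomorphic in $\ds(R)$ to a maximal Cohen--Macaulay module, or at least lies in $\bigcup_{t}{|H'|}_{t}^{\ds(R)}$ for a fixed $H'$.

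For this step I would use the canonical module $\omega$ and the coresolution from the proof of Lemma \ref{cmextlem}. For $N\in\CM(R)$ there is an exact sequence $0\to N\to\omega^{\oplus b_0}\to\cdots\to\omega^{\oplus b_{n-1}}\to L\to 0$ with $L\in\CM(R)$, obtained by dualizing a free resolution of $N^{\dagger}$. Splitting it into short exact sequences $0\to L_i\to\omega^{\oplus b_i}\to L_{i+1}\to0$, with all $L_i\in\CM(R)$, gives triangles $L_i\to\omega^{\oplus b_i}\to L_{i+1}\to L_i[1]$ in $\ds(R)$. Rotating, $L_i[1]\in{|L_{i+1}|}_{1}\ast{|\omega[1]|}_{1}$. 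Iterating, $N[n]=L_0[n]$ lies in the closure built from $L=L_n\in\CM(R)$ and the objects $\omega[j]$ with $1\le j\le n$.

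The main obstacle is that the shifts $\omega[j]$ are unbounded as $n$ grows, so they have to be absorbed into a single fixed generator. This is where the locally Gorenstein hypothesis enters. Because $R_\p$ is Gorenstein for every $\p\neq\m$, $\omega$ is locally free on the punctured spectrum. Hence in the Verdier quotient by $\thick(R)$ it should behave like a module of finite projective dimension up to a finite-length defect. I would try to show that $\omega[j]$ lies in ${|E|}_{c}^{\ds(R)}$ for some fixed $E\in\CM(R)$ and a constant $c$ independent of $j$. The route I would take is to compare $\omega$ with $\syz^{-j}\syz^{j}\omega$ and use that $\Ext^{>0}_R(\omega,R)$ has finite length, since $\omega$ is locally free on the punctured spectrum. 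If this works, every $\omega[j]$ is a bounded extension of maximal Cohen--Macaulay modules, and the lemma then puts each of those in $\bigcup_t{|H|}_t$.

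If that direct bound fails, the fallback is the following. On $\CM(R)$ the category $\ds(R)$ restricts to the stable category modulo $\add\omega$-approximations. I would then observe that each $N[n]$ is isomorphic in $\ds(R)$ to some $M\in\CM(R)$: choose $M$ with $\syz^{n}M\cong N$ up to free summands, which exists because the modules that are syzygies of arbitrary order coincide with the Cohen--Macaulay modules in $\CM(R)$ that are locally free on the punctured spectrum modulo $\omega$-summands. Either way, one concludes $\ds(R)=\bigcup_{n>0}{|H\oplus\omega|}_{n}^{\ds(R)}$, with $H\oplus\omega\in\mod R$ as the desired object.
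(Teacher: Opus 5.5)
There is a genuine gap: the step that carries the real content is never proved. Your reductions are sound. Every object of $\ds(R)$ is $N[n]$ with $N\in\CM(R)$ and $n\ge0$, and the $\omega$-coresolution gives $L_i[1]\in{|L_{i+1}|}_1\ast{|\omega[1]|}_1$. But everything then rests on one claim: a positive shift of a maximal Cohen--Macaulay module, already $\omega[1]$, must lie in the \emph{unshifted} extension closure of finitely many maximal Cohen--Macaulay modules in $\ds(R)$. For this you only write ``I would try'' and ``if this works''. The suggested mechanism does not work as stated. Finite length of $\Ext^{>0}_R(\omega,R)$ controls $\omega$ only up to objects supported at $\m$, and such objects (e.g.\ $k$) are in general nonzero in $\ds(R)$; they die in $\d(R)$, not here. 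Also, $\syz^{-j}$ is not an inverse of $\syz^{j}$ on $\CM(R)$ when $R$ is not Gorenstein, so comparing $\omega$ with $\syz^{-j}\syz^{j}\omega$ has no content.

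The fallback rests on a false claim. Take $N=\omega$ and $n=1$; then no $M\in\CM(R)$ has $\syz M\cong\omega$ up to free summands unless $R$ is Gorenstein. Indeed, apply $\Hom_R(-,\omega)$ to $0\to\omega\oplus R^{\oplus b}\to F\to M\to0$ and use $\Ext^1_R(M,\omega)=0$. This gives a surjection $\omega^{\oplus f}\to R\oplus\omega^{\oplus b}$, so $R\in\add\omega$, and dualizing again shows $\omega$ is free. Finally, a bound on $\omega[j]$ uniform in $j$ is both unproved and unnecessary.

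The paper supplies exactly the missing input from \cite[Proposition 4.4(1)]{AIT}, which is where the hypothesis that $R$ is locally Gorenstein on the punctured spectrum is used. With $H$ from Lemma \ref{cmextlem}, there is $N\in\CM(R)$ such that $H\in\bigcup_t{|N[-1]|}_t^{\ds(R)}$. Shifting gives $H[1]\in\bigcup_t{|N|}_t\subseteq\bigcup_t{|H|}_t$, so $H[1]\in{|H|}_a$ for some $a$. By Remark \ref{rem_basic_rank}, $H[i]\in{|H|}_{a^i}$, and for $M\in{|H|}_b^{R}$ one gets $M[i]\in{|H[i]|}_b\subseteq{|H|}_{ba^i}$. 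The exponential growth in $i$ is harmless, since only the union is claimed.

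The same reduction works in your framework. It suffices to know the single statement $\omega[1]\in\bigcup_t{|H|}_t$, which is equivalent to $H[1]\in\bigcup_t{|H|}_t$. Induction on the shift via your coresolution then handles all $N[n]$. But that single statement is the real content of the proposition, and your proposal gives no argument for it.
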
 
\begin{proof} 
By Lemma \ref{cmextlem}, we have $\CM(R)=\bigcup_{t>0}{|H|}^{R}_{t}$ for some $H\in\CM(R)$. 
By \cite[Proposition 4.4(1)]{AIT}, there exists $N\in\CM(R)$ such that $H\in\bigcup_{t>0}{|N[-1]|}^{\ds(R)}_{t}$. 
Hence $H[1]\in\bigcup_{t>0}{|N|}^{\ds(R)}_{t}\subseteq\bigcup_{t>0}{|H|}^{\ds(R)}_{t}$, where the last inclusion follows from $N\in\CM(R)$. 
Thus $H[1]\in{|H|}^{\ds(R)}_{a}$ for some $a>0$, and hence $H[j]\in{|H|}^{\ds(R)}_{a^j}$ for every $j>0$.
Now let $X\in\ds(R)$. 
Then $X\cong M[i]$ in $\ds(R)$ for some $M\in\CM(R)$ and $i\in\mathbb Z$. 
If $i\le0$, then $M[i]\cong\syz^{-i}M$ in $\ds(R)$, and since $\syz^{-i}M\in\CM(R)$, we have $M[i]\in\bigcup_{t>0}{|H|}_{t}^{\ds(R)}$. 
If $i>0$, choose $b>0$ such that $M\in{|H|}_{b}^{R}$.
Then $M[i]\in{|H[i]|}_{b}^{\ds(R)}\subseteq{|H|}_{ba^i}^{\ds(R)}$. 
This proves the assertion. 
\end{proof} 
\begin{rem}
\begin{enumerate}[\rm(1)]
\item 
In view of Remark \ref{entropy}, Proposition \ref{extdsg} may be viewed as a refinement of \cite[Proposition 4.4(2)]{AIT}, as it gives the intermediate generation property $\ds(R)=\bigcup_{n>0}{|H|}_{n}^{\ds(R)}$.
\item 
Under the assumptions of Proposition \ref{extdsg}, the proof of \cite[Proposition 4.4(1)]{AIT} shows that there exists a positive integer $c$ such that $\CM(R)[1]\subseteq{|\CM(R)|}_{c}^{\ds(R)}$.
\end{enumerate}
\end{rem}
\section{Ranks of Verdier quotients}
In this section, we study the rank of $\d(R)$ in terms of its localization on the punctured spectrum. 
We first establish lower and upper bounds for the rank and then use them to prove the main result of this paper.
We begin by recalling some loci associated with a noetherian ring $R$.
We denote by $\Max(R)$ the set of maximal ideals of $R$, and by $\Min(R)$ the set of minimal prime ideals of $R$.
\begin{dfn}\label{def_locus}
Let $R$ be a noetherian ring.
\begin{enumerate}[\rm(1)]
\item
For a subset $W$ of $\Spec R$, we define the dimension of $W$, denoted by $\dim W$, to be the supremum of the integers $n$ such that there exists a chain $\p_{0}\subsetneq \p_{1}\subsetneq \cdots \subsetneq \p_{n}$ of prime ideals in $W$.
\item
A subset $\Phi$ of $\Spec R$ is {\em specialization-closed} if $\q\in\Phi$ for all prime ideals $\p\subseteq\q$ with $\p\in\Phi$.
If $\Phi$ is a specialization-closed subset of $\Spec R$, then one has $\dim\Phi=\sup\{\dim R/\p\mid\p\in\Phi\}$.
\item
We denote by $\Sing(R)$ the {\em singular locus} of $R$, that is, the set of prime ideals $\p$ such that $R_{\p}$ is not regular.
We say that $R$ is {\em J-1} if $\Sing(R)$ is a closed subset of $\Spec R$.
This terminology follows \cite{Mat.CA}.
We say that $R$ has {\em isolated singularities} if $\dim\Sing(R)\le 0$.
\item
Let $M$ be a finitely generated $R$-module.
We denote by $\NF(M)$ the {\em nonfree locus} of $M$, that is, the set of prime ideals $\p$ of $R$ such that $M_{\p}$ is not a free $R_{\p}$-module.
\end{enumerate}
\end{dfn}
\begin{rem}\label{rem_locus}
Let $R$ be a commutative noetherian ring.
\begin{enumerate}[\rm(1)]
\item
The subset $\Sing(R)$ is specialization-closed.
Any subset of $\Max(R)$ is specialization-closed.
\item
Recall that $\Spec R$ is finite if and only if $R$ is semilocal and $\dim R\le 1$.
Hence, if $R$ is semilocal, then $\Sing(R)$ is finite if and only if $R$ is J-1 and $\dim\Sing(R)\le 1$, and this is equivalent to saying that $\Sing(R)=\{\m_{1},\ldots,\m_{a},\p_{1},\ldots,\p_{b}\}$, where $\m_{i}\in\Max(R)$ and $\p_{j}\notin\Max(R)$.
\item
If $d=\dim R$ is finite, then one has $\NF(\Omega^{n}M)\subseteq\Sing(R)$ for all $n\ge d$ and all $M\in\mod R$.
\end{enumerate}
\end{rem}
We record the following observation on the localization of Verdier quotients associated with specialization-closed subsets of $\Spec R$.
\begin{rem}\label{rmk_vl}
Let $\Phi$ be a specialization-closed subset of $\Spec R$, and set $\T=\db(R)/\thick\{R,R/\p\mid \p\in\Phi\}$. 
If $\q\in\Spec R\setminus\Phi$, then the localization functor $(-)_{\q}:\db(R)\to\db(R_{\q})$ induces an exact dense functor $\T\to\ds(R_{\q})$. 
Indeed, there is no prime ideal $\p\in\Phi$ such that $\p\subseteq\q$, since $\Phi$ is specialization-closed.
Hence $(R/\p)_{\q}=0$ for every $\p\in\Phi$. 
On the other hand, the localization functor $(-)_{\q}\colon\db(R)\to\db(R_{\q})$ is exact and dense by \cite[Lemma 4.2(1)]{AT2015}.
Therefore it induces an exact dense functor $\T\to\db(R_{\q})/\thick\{R_{\q}\}=\ds(R_{\q})$.
\end{rem}
We give a lower bound for the rank of $\d(R)$ in terms of the ranks of singularity categories on the punctured spectrum.
For basic properties of $\ds(R)$ and $\db(R)$, we refer the reader to \cite[Lemma 2.4]{DT2015b} and \cite[Lemma 3.9]{MT}.
The following proposition generalizes \cite[Theorem 4.4]{MT}.
\begin{prop}\label{lower}
Let $R$ be a commutative noetherian ring.
Let $\Phi$ be a specialization-closed subset of $\Spec R$.
Set $\T=\db(R)/\thick\{R,R/\p\mid\p\in\Phi\}$.
Then the following hold.
\begin{enumerate}[\rm(1)]
\item
Let $\X$ be a subcategory of $\db(R)$ and $n\geq0$ an integer.
If $\T={|\X|}_{n}^{\T}$, then we have $\ds(R_{\q})={|\X_{\q}|}_{n}^{\ds(R_{\q})}$ for all $\q\in\Spec R\setminus\Phi$.
\item 
One has
\begin{align*}
\rank\T&\ge\sup\{\rank\ds(R_{\q})\mid\q\in\Spec R\setminus\Phi\} \\
&\ge\sup\{\rank\ds(R_{\q})\mid\q\in\Sing(R)\setminus\Phi\}.
\end{align*}
In particular, if $(R,\m,k)$ is local, then the inequality $\rank\d(R)\ge\sup\{\rank\ds(R_{\q})\mid\q\in\Sing(R)\setminus\{\m\}\}$ holds.
\end{enumerate}
\end{prop}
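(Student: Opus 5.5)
The plan is to push a generating filtration in $\T$ forward along the exact dense functor of Remark \ref{rmk_vl}, and then read off the rank inequalities from (1).

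For (1), fix $\q\in\Spec R\setminus\Phi$. By Remark \ref{rmk_vl}, localization induces an exact dense functor $F\colon\T\to\ds(R_{\q})$, and $F$ sends the image of an object $X\in\db(R)$ to $X_{\q}$. I will prove by induction on $r$ that $F({|\X|}_{r}^{\T})\subseteq{|\X_{\q}|}_{r}^{\ds(R_{\q})}$. The induction uses three facts about an exact additive functor:
\begin{enumerate}[\rm(i)]
\item it carries finite direct sums to finite direct sums;
\item it carries direct summands to direct summands, since an idempotent decomposition $X\oplus Y\cong Z$ is preserved;
\item it carries exact triangles to exact triangles, so $F(\C\ast\D)\subseteq F(\C)\ast F(\D)$.
\end{enumerate}
Hence $F(\add\C)\subseteq\add F(\C)$, which is the case $r=1$, and the inductive step follows from the definition ${|\X|}_{r}={|{|\X|}_{r-1}\ast{|\X|}_{1}|}_{1}$. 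Now suppose $\T={|\X|}_{n}^{\T}$. Every object of $\ds(R_{\q})$ is isomorphic to some $F(X)$ by density, so it lies in $F({|\X|}_{n}^{\T})\subseteq{|\X_{\q}|}_{n}^{\ds(R_{\q})}$. The reverse inclusion holds trivially, and the case $n=0$ is trivial as well.

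For (2), assume $\rank\T=n<\infty$; otherwise there is nothing to prove. Choose $G\in\T$ with $\T={|G|}_{n+1}^{\T}$. Every object of $\T$ is represented by an object of $\db(R)$, so I may take $G\in\db(R)$ and $\X=\{G\}$. Part (1) gives $\ds(R_{\q})={|G_{\q}|}_{n+1}$, hence $\rank\ds(R_{\q})\le n$ for every $\q\notin\Phi$. This is the first inequality. The second holds because $\Sing(R)\setminus\Phi\subseteq\Spec R\setminus\Phi$. It could also be deduced from the fact that $\ds(R_{\q})=0$, which has rank $0$, when $\q$ is a regular point, though this is not needed.

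For the local statement, take $\Phi=\{\m\}$. This set is specialization-closed by Remark \ref{rem_locus}(1). Since $R/\m=k$, the category $\T$ is exactly $\d(R)$.

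The only point needing care is (ii) in part (1): the additive closure involves taking direct summands, and we must check that these are preserved by $F$. This holds because $F$ is additive. No further obstacle is expected.
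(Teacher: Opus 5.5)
Your proof is correct and follows the same route as the paper. Both arguments push the generation condition through the exact dense functor $\T\to\ds(R_{\q})$ of Remark \ref{rmk_vl} and deduce (2) from (1). The only difference is that you write out the routine induction showing that an exact functor sends ${|\X|}_{r}^{\T}$ into ${|\X_{\q}|}_{r}^{\ds(R_{\q})}$, which the paper leaves implicit.
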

\begin{proof}
(1) Let $\q$ be a prime ideal in $\Spec R\setminus\Phi$.
By Remark \ref{rmk_vl}, the localization functor induces an exact dense functor $\T\to\ds(R_{\q})$.
Hence the equality $\T={|\X|}^{\T}_{n}$ yields $\ds(R_{\q})={|\X_{\q}|}^{\ds(R_{\q})}_{n}$.
The assertion in (2) immediately follows from (1).
\end{proof}
Now we relate generation of the Verdier quotient $\db(R)/\thick\{R,R/\p\mid \p\in\Phi\}$, where $\Phi$ is a subset of $\Spec R$, to bounds on the size of the singular locus of $R$.
\begin{prop}\label{sing}
Let $R$ be a commutative noetherian ring and $\Phi$ a subset of $\Spec R$.
Set $\T=\db(R)/\thick\{R,R/\p\mid\p\in\Phi\}$.
\begin{enumerate}[\rm(1)]
\item
Assume that $\T$ admits a classical generator (i.e., there exists an object $G\in\T$ such that $\T=\thick G$).
\begin{enumerate}[\rm(a)]
\item
If $R$ is semi-local, $\Phi$ is closed, and $\dim\Phi\le 1$, then $\Phi$ is finite.
\item
If $\Phi$ is finite, then $\Sing(R)$ is closed.
\item
If $\Phi$ is a nonempty specialization-closed subset and $R_{\q}$ has an isolated singularity for each $\q\in\Spec R\setminus\Phi$, then we have $\dim\Sing(R)\le \dim\Phi +1$.
\item
If $\Phi$ is finite and $R_{\q}$ has an isolated singularity for each $\q\in\Spec R\setminus\Phi$, then $\Sing(R)$ is finite.
\end{enumerate}
\item
Assume that $\Phi$ is specialization-closed, $R$ is locally Gorenstein on $\Spec R\setminus\Phi$, and $\T$ admits an additive generator.
\begin{enumerate}[\rm(a)]
\item
For each $\q\in\Spec R\setminus\Phi$, the ring $R_{\q}$ has an isolated singularity.
\item
If $\Phi$ is nonempty, then we have $\dim\Sing(R)\le \dim\Phi +1$.
\item
If $\Phi$ is finite, then $\Sing(R)$ is also finite.
\end{enumerate}
\end{enumerate}
\end{prop}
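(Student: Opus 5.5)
Part (1) reduces to a statement about nonfree loci. Part (2) reduces to part (1) by a localization argument.

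\emph{Key fact for (1).} Suppose $\T=\thick G$ and $G$ is represented by a complex $X\in\db(R)$. Replacing $X$ by a high syzygy (up to shift), we may take $X$ to be a module $M$. Set $W=\NF(M)$, a closed subset of $\Spec R$.

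Let $\q\notin\Phi$ with $\q\notin\Phi'$, where $\Phi'$ is the specialization closure of $\Phi$. By Remark~\ref{rmk_vl}, $\ds(R_\q)=\thick(M_\q)$. If moreover $\q\notin W$, then $M_\q$ is free, so $\ds(R_\q)=0$ and $R_\q$ is regular. Hence
\[
\Sing(R)\subseteq W\cup\Phi',
\]
and more precisely $\Sing(R)\setminus\Phi'\subseteq W$.

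For the reverse control we use that $\Sing(R)$ is specialization-closed. We would also like $W\subseteq\Sing(R)\cup\Phi'$, and we plan to arrange this by the choice of generator. If $M_\q$ is not free but $R_\q$ is regular, then a high syzygy of $M$ is free at $\q$. So we replace $M$ by $\Omega^nM$, with $n$ large relative to the finitely many minimal primes of $W$, iterating finitely often by noetherian induction on closed sets. This gives $\Sing(R)\setminus\Phi'=W\setminus\Phi'$ up to closure.

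\emph{Subcases of (1).}
\begin{enumerate}[\rm(a)]
\item Since $\Phi$ is closed, write $\Phi=V(I)$. Assume $\Phi$ is infinite and $\dim\Phi\le1$. Then $\Phi$ has infinitely many height-one-in-$\Phi$ points $\p$ with $\dim R/\p=1$; these are non-maximal, and because $R$ is semilocal they lie in finitely many components. For such $\p$, the object $R/\p$ is killed in $\T$. I would instead work with $R/\p$ against $G$: in $\T$ the object $R/\p$ is zero, so localizing at a minimal prime $\p$ of $\Phi$ gives no contradiction. This part is the main obstacle. My plan is to show that a classical generator yields finitely many "types" of primes, by comparing $\thick G$ with the smallest thick subcategory containing $R$, $G$ and finitely many $R/\p$, and then using that the classification of thick subcategories containing $R$ via specialization-closed subsets (as in Takahashi's work) forces $\Phi$ to coincide with a closed set determined by finitely many primes.
\item If $\Phi$ is finite, then $\Phi'$ is the closed set $\bigcup_{\p\in\Phi}V(\p)$. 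Hence $\Sing(R)=(W\cup\Phi')\cap\Sing(R)$. Since $\Sing(R)$ is specialization-closed and $W\cap\Sing(R)$ is closed after the syzygy adjustment, $\Sing(R)$ is a finite union of closed sets, hence closed.
\item For $\q\in\Sing(R)\setminus\Phi$, the isolated singularity of $R_\q$ means that every prime strictly inside $\q$ is regular. So $\Sing(R)\setminus\Phi$ consists of primes that are minimal in $\Sing(R)$. A chain in $\Sing(R)$ therefore has at most its bottom element outside $\Phi$, which gives $\dim\Sing(R)\le\dim\Phi+1$.
\item Combine (b) with the fact that the minimal elements of the closed set $\Sing(R)$ are finitely many, and use (c)'s observation that the points outside $\Phi$ are among these minimal elements.
\end{enumerate}

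\emph{Part (2).} Localizing an additive generator via Proposition~\ref{lower}(1) with $n=1$ gives $\ds(R_\q)$ an additive generator. Since $R_\q$ is Gorenstein, Proposition~\ref{frob} shows $\CM(R_\q)$ has finite representation type. By Auslander and Huneke--Leuschke, $R_\q$ then has an isolated singularity, which is (a). Parts (b) and (c) then follow from (1)(c) and (1)(d), since an additive generator is a classical generator.
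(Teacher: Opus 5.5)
Your arguments for (1)(c), (2)(a) and (2)(b) match the paper's. However, (1)(a) and (1)(b) are not established, and (1)(d) and (2)(c) inherit the gap in (1)(b).

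\textbf{Part (1)(a).} You stop at ``the main obstacle'' and propose a detour through the classification of thick subcategories. None of this is needed; the generator plays no role here. Write $\Phi=\V(I)$. Then $\Phi$ is homeomorphic to $\Spec R/I$, and $R/I$ is a semilocal noetherian ring of dimension at most one, so its spectrum is finite (Remark~\ref{rem_locus}(2)). Concretely, take a prime of $\Phi$ that properly contains another prime of $\Phi$. It must be maximal: otherwise, since $\Phi$ is closed, it lies in a chain of length two inside $\Phi$. So $\Phi$ consists of finitely many minimal primes of $I$ together with finitely many maximal ideals. You had essentially observed this (``finitely many components'') without drawing the conclusion.

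\textbf{Part (1)(b).} Your nonfree-locus argument only sees primes outside $\Phi'$. For $\q\in\Phi'$, localization at $\q$ does not in general kill $R/\p$ in $\ds(R_\q)$ for $\p\subseteq\q$, so you get no functor $\T\to\ds(R_\q)$ and no information about $\Sing(R)\cap\Phi'$. Your conclusion that $\Sing(R)$ is ``a finite union of closed sets'' needs $\Sing(R)\cap\Phi'$ to be closed, and nothing in your argument shows this. For instance, nothing rules out $\Sing(R)\cap\V(\p)$ having infinitely many minimal elements.

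The missing idea is to lift generation from the quotient back to $\db(R)$. Since $\T=\thick G$, one has $\db(R)=\thick\{G,R,R/\p\mid\p\in\Phi\}$ by \cite[Lemma 7.5(1)]{T2017}. When $\Phi$ is finite, the single object $G\oplus\bigoplus_{\p\in\Phi}R/\p$ is therefore a classical generator of $\ds(R)$. Then $\Sing(R)$ is closed by \cite[Lemma 2.9]{IT2019}: it is the infinite projective dimension locus of this generator, which is closed. This is the only place where finiteness of $\Phi$ is genuinely used, and without it (1)(d) and (2)(c) do not follow.

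\textbf{Side remark.} Your ``syzygy adjustment'' is more cleanly phrased as follows: the closed sets $\NF(\Omega^n M)$ decrease and stabilize at the infinite projective dimension locus of $M$, which lies in $\Sing(R)$. This does not repair the problem with $\Phi'$, though.
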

\begin{proof}
(1)(a) The assertion follows from Remark \ref{rem_locus}(2).
(1)(b) By \cite[Lemma 7.5(1)]{T2017}, we have $\db(R)=\thick\{G,R,R/\p\mid\p\in\Phi\}$. 
Hence the object $G\oplus (\oplus_{\p\in\Phi}R/\p)$ is a classical generator of $\ds(R)$. 
Therefore $\Sing(R)$ is closed by \cite[Lemma 2.9]{IT2019}.
(1)(c) If $\dim\Phi=\infty$, there is nothing to prove. 
Put $d=\dim\Phi$. 
Suppose, to the contrary, that $\dim\Sing(R)>d+1$. 
Then there exists a chain of prime ideals $\p_0\subsetneq\p_1\subsetneq\cdots\subsetneq\p_{d+2}$ with $\p_i\in\Sing(R)$ for all $0\le i\le d+2$. 
If $\p_1\in\Phi$, then the specialization-closedness of $\Phi$ implies $\p_i\in\Phi$ for all $1\le i\le d+2$.
Hence $\p_1\subsetneq\p_2\subsetneq\cdots\subsetneq\p_{d+2}$ is a chain in $\Phi$ of length $d+1$, which contradicts $d=\dim\Phi$. 
Thus $\p_1\in\Spec R\setminus\Phi$. 
Since $R_{\p_1}$ has an isolated singularity and $\p_0R_{\p_1}$ is a non-maximal prime ideal of $R_{\p_1}$, the localization $(R_{\p_1})_{\p_0R_{\p_1}}\cong R_{\p_0}$ is regular.
This contradicts $\p_0\in\Sing(R)$. 
Therefore $\dim\Sing(R)\le \dim\Phi+1$.
(1)(d) By (1)(b), the subset $\Sing(R)$ is closed. 
Write $\Sing(R)=\V(I)$ for some ideal $I$ of $R$. 
Let $\q\in\V(I)$ be a non-minimal element of $\V(I)$. 
Then there exists $\p\in\V(I)$ such that $\p\subsetneq\q$. If $\q\notin\Phi$, then $R_{\q}$ has an isolated singularity. 
Since $\p R_{\q}$ is a non-maximal prime ideal of $R_{\q}$, the localization $(R_{\q})_{\p R_{\q}}\cong R_{\p}$ is regular. 
This contradicts $\p\in\V(I)=\Sing(R)$. 
Hence $\q\in\Phi$. 
Thus, we have $\V(I)\subseteq \Min\V(I)\cup\Phi$. 
Since $\Min\V(I)$ and $\Phi$ are finite, $\Sing(R)=\V(I)$ is also finite.
(2)(a) Let $\q\in\Spec R\setminus\Phi$. 
Since $\T$ admits an additive generator, $R_{\q}$ has finite CM type by Propositions \ref{frob}(2) and \ref{lower}(1).
Hence $R_{\q}$ has an isolated singularity by the Auslander--Huneke--Leuschke--Wiegand theorem; see \cite[Theorem 6.12]{LW}.
The assertions in (2)(b) and (2)(c) follow by combining (1)(c)(d) and (2)(a).
\end{proof}
To obtain an upper bound for the rank of $\d(R)$, we need to control shifts of modules in $\d(R)$. 
We introduce the following condition for this purpose.
Let $R$ be a noetherian local ring.
We say that $R$ satisfies condition ($\mathrm{CS}$) if for every $M\in\c(R)$ and every $n\in\ZZ$, there exists $N\in\c(R)$ such that $M[n]\in\add_{\d(R)}N$ in $\d(R)$.
Since $\c(R)$ is closed under syzygies, $R$ satisfies ($\mathrm{CS}$) if and only if there exists a positive integer $a$ such that for every $M\in\c(R)$, there exists $N\in\c(R)$ such that $M[a]\in\add_{\d(R)}N$ in $\d(R)$.
The following gives sufficient conditions for $R$ to satisfy ($\mathrm{CS}$).
\begin{prop}\label{suff_cs}
Let $(R,\m)$ be a noetherian local ring which is locally Gorenstein on the punctured spectrum.
Assume that either $R$ is Cohen--Macaulay or $\Sing(R)$ is finite.
Then $R$ satisfies $(\mathrm{CS})$.
\end{prop}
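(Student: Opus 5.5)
By the reformulation preceding the statement, it suffices to show one thing. For every $M\in\c(R)$ there should be $N\in\c(R)$ with $M[1]\in\add_{\d(R)}N$. Shifts in the negative direction are free, since $M[-1]\cong\syz M$ in $\ds(R)$, hence in $\d(R)$, and $\syz M\in\c(R)$. So the real task is to realize a positive shift (a cosyzygy) of $M$ by a module in $\c(R)$, up to objects of $\thick(R\oplus k)$.

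\emph{Case 1: $R$ is Cohen--Macaulay.} Then $\c(R)=\CM(R)$. The plan is to find a finite-length correction. Since $R$ is locally Gorenstein on the punctured spectrum, $M_\p$ is a maximal Cohen--Macaulay module over the Gorenstein ring $R_\p$ for every $\p\neq\m$, so $\Ext^i_{R_\p}(M_\p^{*},R_\p)=0$ for $i>0$. I would take a free cover $F\to M^{*}$ and dualize, giving $0\to M^{**}\to F^{*}\to(\syz M^{*})^{*}$. The cokernel of the last map is $\Ext^1(M^{*},R)$, which has finite length. Likewise, the natural map $M\to M^{**}$ has kernel and cokernel of finite length, being an isomorphism on the punctured spectrum. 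Objects of finite length lie in $\thick(k)$, so in $\d(R)$ we get $M\cong M^{**}$, and the short exact sequence
\[0\to M^{**}\to F^{*}\to C\to0,\qquad C=\operatorname{Im}\bigl(F^{*}\to(\syz M^{*})^{*}\bigr),\]
gives $C\cong M^{**}[1]\cong M[1]$ in $\d(R)$. The remaining problem is that $C$ need not be maximal Cohen--Macaulay. The fix is to replace $C$ by $N=\syz^{-d}\syz^{d}C$, or more simply to use that $C[ -d]\cong\syz^dC\in\CM(R)$, and iterate. The cleaner route is the following: the cosyzygy construction above, applied $d+1$ times, produces an exact sequence $0\to M\to F_0\to\cdots\to F_d\to L\to0$ that is exact up to finite-length homology. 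One then computes $M[1]\cong\syz^{d}L$ in $\d(R)$ with $\syz^dL\in\CM(R)$. Concretely, I would dualize a free resolution of $M^{*}$ of length $d+1$, as in the proof of Lemma~\ref{cmextlem} but with $R$ in place of $\omega$, noting that all the $\Ext^{>0}(M^{*},R)$ have finite length. The truncation then gives, in $\d(R)$, the relation $M\cong L'[-d-1]$ for a module $L'$, so $M[1]\cong L'[-d]\cong\syz^dL'\in\CM(R)$.

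\emph{Case 2: $\Sing(R)$ is finite.} Here the same dualization argument will not work directly, because $\Ext^{>0}_R(M^{*},R)$ is only supported on $\Sing(R)$ and need not have finite length. I would work modulo $\thick\{R/\p\}$ only for $\p=\m$, handling the finitely many non-maximal singular primes $\p_1,\dots,\p_b$ as follows. At each $\p_j$ the ring $R_{\p_j}$ is Gorenstein of dimension at most $1$ and has an isolated singularity. The same construction done over the localization gives the cosyzygy exactly, and the discrepancy over $R$ is supported in $\Sing(R)$, which has dimension at most $1$. The claim I would try to prove is that the homology of the dualized resolution becomes zero after passing to a high enough truncation and syzygy. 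This should follow from Remark~\ref{rem_locus}(3) together with depth counting along $\V(\p_j)$.

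I expect Case 2 to be the main obstacle: making the "exact up to homology supported on $\Sing(R)$" argument yield an honest isomorphism in $\d(R)$ when the homology is not of finite length. The approach I would try first is to take the complex $\mathbf{R}\Hom(\mathbf{R}\Hom(M,R),R)$ truncated. Its cone relative to $M$ is a bounded complex whose cohomology is supported on $\Sing(R)$. I would then show that, after syzygies, this cone lies in $\thick(R\oplus k)$, using that localizing at each $\p_j$ it is perfect because $R_{\p_j}$ is Gorenstein.
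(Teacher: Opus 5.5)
\paragraph{Where the proposal falls short.} The ``cleaner route'' in your Case 1 is correct. With $F_\bullet\to M^*$ a free resolution, the perfect complex $F_0^*\to\cdots\to F_d^*$ has cohomology $M^{**}$ in degree $0$, $\Ext^i_R(M^*,R)$ in degrees $1,\dots,d-1$, and $L=\operatorname{coker}(F_{d-1}^*\to F_d^*)$ in degree $d$. Truncation in $\d(R)$ then gives $M[1]\cong M^{**}[1]\cong L[-d]\cong\syz^dL$.

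Case 2, however, is only a plan. Two of its assertions are never justified: that the homology ``becomes zero after a high enough truncation and syzygy,'' and that the cone ``lies in $\thick(R\oplus k)$ after syzygies.'' The claim that each $R_{\p_j}$ has dimension at most $1$ is also false in general; only $\dim R/\p_j=1$ holds. As written, the proposal does not prove the statement when $R$ is not Cohen--Macaulay.

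\paragraph{The missing observation.} The obstacle you identify in Case 2 does not exist. For $M\in\c(R)$ and any $\p\neq\m$, you have $\depth M_\p\ge\depth R_\p=\dim R_\p$. So $M_\p$ is maximal Cohen--Macaulay over the Gorenstein ring $R_\p$, whether or not $R$ itself is Cohen--Macaulay. Hence $\Ext^{i}_R(M^*,R)_\p=0$ for $i>0$, and $(M\to M^{**})_\p$ is an isomorphism. These modules are therefore supported in $\{\m\}$, not merely in $\Sing(R)$, and have finite length.

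The only use of Cohen--Macaulayness in your Case 1 was to get $\syz^dL\in\CM(R)$. Replace this by $\syz^dL\in\syz^d(\mod R)\subseteq\c(R)$ for $d=\dim R$. Your Case 1 argument then goes through verbatim. It proves $(\mathrm{CS})$ for every local ring that is locally Gorenstein on the punctured spectrum, with no hypothesis on $\Sing(R)$ at all.

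\paragraph{Comparison with the paper.} The paper does not dualize. For the Cohen--Macaulay case it cites \cite[Remark 4.9]{MT}. When $\Sing(R)$ is finite, it uses \cite[Lemma 3.7(2)]{T2023} to glue the local memberships $M_\p\in\add_{R_\p}\syz^{d+1}(\mod R)_\p$ at the finitely many non-maximal singular primes. This gives a sequence $0\to L\to M\oplus M'\to X\to0$ with $X\in\syz^{d+1}(\mod R)$ and $L$ locally free on the punctured spectrum. Then $M[1]$ is a summand in $\d(R)$ of $X[1]$, which is a $d$-th syzygy. That gluing step is where the finiteness of $\Sing(R)$ is used. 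Your duality argument needs only the finite length of the error terms, which the punctured-spectrum Gorenstein hypothesis already supplies.
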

\begin{proof}
We may assume that $R$ does not have an isolated singularity.
The assertion in the Cohen--Macaulay case follows from the proof of \cite[Remark 4.9]{MT}.
We assume that $\Sing(R)$ is finite, and set $\Sing(R)=\{\m,\p_{1},\ldots,\p_{n}\}$, $\Phi=\{\p_{1},\ldots,\p_{n}\}$, and $\X=\Omega^{d+1}(\mod R)$, where $d=\dim R$.
Note that $\add_{R_{\p}}\X_{\p}=\add_{R_{\p}}\Omega_{R_{\p}}^{d+1}(\mod R_{\p})$ for all $\p\in\Spec R$.
Let $M\in\c(R)$ and $\p\in\Phi$. 
Since $R_{\p}$ is Gorenstein, one has $\c(R_{\p})=\Omega_{R_{\p}}^{d+1}(\mod R_{\p})=\add_{R_{\p}}\X_{\p}$.
Hence $M_{\p}\in\add_{R_{\p}}\X_{\p}$ for all $\p\in\Phi$.
Thus, by \cite[Lemma 3.7(2)]{T2023}, there exists an exact sequence $0\to L\to M\oplus M'\to X\to 0$ such that $\NF(L)\subseteq\NF(M)$ and $\NF(L)\cap\Phi=\emptyset$.
Since $M\in\c(R)$, one has $\NF(L)\subseteq\Sing(R)$.
Hence $\NF(L)\subseteq\{\m\}$, and we have $M\oplus M'\cong X$ in $\d(R)$.
We write $X=\Omega^{d+1}X'\oplus R^{\oplus}$, where $X'\in\mod R$.
Set $N=\Omega^{d}X'$.
Then $N\in\c(R)$, and $M[1]$ is a direct summand of $X[1]\cong \Omega^{d}X'=N$ in $\d(R)$.
Therefore we conclude that $M[1]\in\add_{\d(R)}N$.
\end{proof}
The following lemma provides a sufficient condition for a given subcategory of $\d(R)$ to generate the whole category.
\begin{lem}\label{drgen}
Let $R$ be a noetherian local ring, and $\X$ a subcategory of $\d(R)$. 
Assume that $R$ satisfies $(\mathrm{CS})$ and there exists an integer $n\ge0$ such that $\Omega^{n}\c(R)\subseteq\add_{\d(R)}\X$.
Then one has $\d(R)=\add_{\d(R)}\X$.
\end{lem}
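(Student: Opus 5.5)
Every object of $\d(R)$ will be reduced to a shift of an object of $\c(R)$. The shift is then removed using $(\mathrm{CS})$, and the syzygy hypothesis finishes the argument.

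\textbf{Step 1: reduction to shifts of $\c(R)$.} Let $X\in\d(R)$, regarded as an object of $\db(R)$. By the standard description of the singularity category (\cite[Lemma 2.4]{DT2015b}), there is a module $M\in\mod R$ and an integer $i$ with $X\cong M[i]$ in $\ds(R)$. Applying the quotient functor $\ds(R)\to\d(R)$ gives the same isomorphism in $\d(R)$. Replace $M$ by a high syzygy $\Omega^{m}M$ with $m\ge \depth R$, or $m\ge d=\dim R$ if $d$ is finite. In $\ds(R)$ one has $M\cong\Omega^{m}M[m]$, so $X\cong(\Omega^{m}M)[i+m]$. Since high syzygies lie in $\c(R)$, every object of $\d(R)$ has the form $M[j]$ with $M\in\c(R)$ and $j\in\ZZ$.

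If $d$ is infinite this needs care. I would instead cite \cite[Lemma 3.9]{MT}, which I expect records exactly this reduction for $\d(R)$. In any case $R$ is local, so $\dim R$ is finite.

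\textbf{Step 2: removing the shift and applying the hypothesis.} Fix $M\in\c(R)$ and $j\in\ZZ$, and take the integer $n$ from the hypothesis. By $(\mathrm{CS})$, applied with shift $j+n$, there is $N\in\c(R)$ with $M[j+n]\in\add_{\d(R)}N$. Shifting by $-n$ gives
\[
M[j]\in\add_{\d(R)}\bigl(N[-n]\bigr).
\]
In $\ds(R)$, and hence in $\d(R)$, we have $N[-n]\cong\Omega^{n}N$ up to free summands, which vanish in $\d(R)$. Since $\Omega^{n}N\in\Omega^{n}\c(R)\subseteq\add_{\d(R)}\X$, it follows that $M[j]\in\add_{\d(R)}\X$. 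Combined with Step 1, this gives $\d(R)=\add_{\d(R)}\X$. The reverse inclusion is trivial.

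\textbf{Expected obstacle.} The only delicate point is Step 1: checking that every object of $\d(R)$ really is isomorphic to a shifted module lying in $\c(R)$, and that $\Omega^{n}$ of such a module agrees with the shift $[-n]$ in $\d(R)$. Both are standard facts about $\ds(R)$ that pass to the Verdier quotient $\d(R)$. The rest is bookkeeping with shifts and additive closures.
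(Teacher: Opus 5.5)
Your proof is correct and matches the paper's argument: write every object of $\d(R)$ as a shift $M[j]$ with $M\in\c(R)$ via \cite[Lemma 2.4]{DT2015b}, apply $(\mathrm{CS})$ with shift $j+n$, and then use $N[-n]\cong\Omega^{n}N\in\add_{\d(R)}\X$. The paper instead writes the object as $\Omega^{n}M[l]$ and applies $(\mathrm{CS})$ to $\Omega^{n}M\in\c(R)$, which is only a cosmetic difference.

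One small correction: taking $m\ge\depth R$ does not in general put $\Omega^{m}M$ in $\c(R)$, because $\depth R_{\p}$ can exceed $\depth R$. Your final choice $m\ge\dim R<\infty$ does put it there, so the argument stands.
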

\begin{proof}
Let $X\in\d(R)$.
By \cite[Lemma 2.4(2)]{DT2015b}, there exist $M\in\c(R)$ and $l\in\ZZ$ such that $X\cong\Omega^{n}M[l]$ in $\d(R)$.
Since $R$ satisfies $(\mathrm{CS})$, there exists a module $N\in\c(R)$ such that $\Omega^{n}M[l+n]\in\add_{\d(R)}N$ in $\d(R)$.
By hypothesis, one has $\Omega^{n}N\in\add_{\d(R)}\X$.
Hence $X\cong(\Omega^{n}M[l+n])[-n]\in\add_{\d(R)}N[-n]=\add_{\d(R)}\Omega^{n}N\subseteq \add_{\d(R)}\X$.
Thus $\d(R)\subseteq\add_{\d(R)}\X$, and the reverse inclusion is clear.
Therefore we have $\d(R)=\add_{\d(R)}\X$.
\end{proof}
The following proposition shows that, for a ring $R$ satisfying $(\mathrm{CS})$, the category $\d(R)$ has finite rank under a mild assumption.
\begin{prop}\label{rankfin1}
Let $(R,\m)$ be a quasi-excellent local ring.
\begin{enumerate}[\rm(1)]
\item
If $R$ satisfies $(\mathrm{CS})$, then $\rank\d(R)<\infty$.
\item
Assume that $R$ is locally Gorenstein on the punctured spectrum and that either $R$ is Cohen--Macaulay or $\Sing(R)$ is finite.
Then $\rank\d(R)<\infty$.
\end{enumerate}
\end{prop}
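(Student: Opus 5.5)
Part (2) will follow from part (1) together with Proposition \ref{suff_cs}, since under those hypotheses $R$ satisfies $(\mathrm{CS})$. So the work is in (1), and the plan there is to produce a single module $G$ and an integer $n$ with $\d(R)={|G|}_{n}^{\d(R)}$. Generation by this module will be established through Lemma \ref{drgen}.

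First I will use that $R$ is quasi-excellent, so by \cite[Corollary 3.12]{DLT} there are integers $s,n>0$ and a module $G\in\mod R$ with $\Omega^{s}(\mod R)\subseteq{|G|}_{n}^{R}$. Next I will pass to $\d(R)$. A short exact sequence $0\to X\to Y\to Z\to0$ in $\mod R$ gives an exact triangle $X\to Y\to Z\to X[1]$ in $\db(R)$, and hence in $\d(R)$. Therefore the inclusion ${|G|}_{n}^{R}\subseteq{|G|}_{n}^{\d(R)}$ holds; the additive closure in $\mod R$ maps into the additive closure in $\d(R)$, and induction on $n$ handles the iterated extensions. It follows that $\Omega^{s}(\mod R)\subseteq{|G|}_{n}^{\d(R)}$. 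In particular $\Omega^{s}\c(R)\subseteq{|G|}_{n}^{\d(R)}$, because $\Omega^{s}M\in\Omega^{s}(\mod R)$ for every $M\in\c(R)$, where $\Omega^sM$ is the minimal syzygy.

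Now I will set $\X={|G|}_{n}^{\d(R)}$. This subcategory is closed under finite direct sums and direct summands by definition, so $\add_{\d(R)}\X=\X$. Lemma \ref{drgen}, applied with the integer $s$ and using $(\mathrm{CS})$, then gives $\d(R)=\add_{\d(R)}\X={|G|}_{n}^{\d(R)}$, and hence $\rank\d(R)\le n-1<\infty$. One bookkeeping point needs checking: Lemma \ref{drgen} is stated for the syzygy $\Omega^{n}$ of modules in $\c(R)$, viewed inside $\d(R)$. The modules $\Omega^{s}M$ used above are exactly these, so the hypothesis matches.

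The main obstacle I expect is the careful justification that module-level extension closures ${|-|}^{R}_{n}$ map into the triangulated ones ${|-|}_{n}^{\d(R)}$. This requires compatibility of $\add$ in $\mod R$ with $\add$ in $\d(R)$, and of the iteration ${|\cdot|}_{r-1}\ast{|\cdot|}_{1}$ under the composite functor $\mod R\to\db(R)\to\d(R)$. I would prove it by induction on $r$, using that this functor is additive and sends short exact sequences to triangles. Beyond that, the argument is simply an application of the Dao--Lank--Takahashi bound together with Lemma \ref{drgen}.
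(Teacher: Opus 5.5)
Your proposal is correct and follows the paper's proof essentially step for step: you obtain $\Omega^{s}(\mod R)\subseteq{|G|}_{r}^{R}$ from \cite[Corollary 3.12]{DLT}, push it into $\d(R)$ along $\mod R\to\db(R)\to\d(R)$, and conclude $\d(R)={|G|}_{r}^{\d(R)}$ by Lemma \ref{drgen}. Part (2) is then deduced from Proposition \ref{suff_cs} exactly as you propose, and your extra checks ($\add_{\d(R)}$ of ${|G|}_{r}^{\d(R)}$ equals itself, and short exact sequences give triangles) are the routine details the paper leaves implicit.
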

\begin{proof}
(1) By \cite[Corollary 3.12]{DLT}, there exist integers $s,r\ge0$ and $G\in\mod R$ such that $\Omega^{s}(\mod R)\subseteq{|G|}_{r}^{R}$.
Applying the composite of the natural functors $\mod R\to\db(R)\to\d(R)$, one has $\Omega^{s}(\mod R)\subseteq{|G|}_{r}^{\d(R)}$.
Hence, by Lemma \ref{drgen}, we obtain $\d(R)={|G|}_{r}^{\d(R)}$.

(2) The assertion follows from Proposition \ref{suff_cs} and part (1).
\end{proof}
The next lemma establishes the compatibility between localization and the construction $|-|$.
\begin{lem}\label{lem_add}
Let $\X$ be a subcategory of $\mod R$.
Let $\p$ be a prime ideal of $R$ and $n$ a nonnegative integer.
Then one has $\add_{R_{\p}}\{({|\X|}_{n}^{R})_{\p}\}={|\X_{\p}|}_{n}^{R_{\p}}$.
\end{lem}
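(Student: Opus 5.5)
We argue by induction on $n$, proving both inclusions simultaneously. Throughout we use two facts. Localization $(-)_{\p}\colon\mod R\to\mod R_{\p}$ is exact and commutes with finite direct sums. Every finitely generated $R_{\p}$-module is isomorphic to $N_{\p}$ for some $N\in\mod R$. For $n=0$ both sides are the zero subcategory, so there is nothing to prove. For $n=1$ we must show $\add_{R_{\p}}\{(\add_R\X)_{\p}\}=\add_{R_{\p}}\X_{\p}$. Since $(\add_R\X)_{\p}$ consists of direct summands of modules $(X_1\oplus\cdots\oplus X_m)_{\p}\cong (X_1)_{\p}\oplus\cdots\oplus(X_m)_{\p}$, it lies in $\add_{R_{\p}}\X_{\p}$. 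Conversely $\X_{\p}\subseteq(\add_R\X)_{\p}$. Taking additive closures gives the equality.

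For the inductive step, assume the equality holds for $n-1\ge1$. First we show the inclusion $\subseteq$. Let $M\in{|\X|}_{n}^{R}$. Then $M$ is a direct summand of some $E$ fitting into an exact sequence $0\to C\to E\to D\to0$ with $C\in{|\X|}_{n-1}^{R}$ and $D\in{|\X|}_{1}^{R}$. Localizing gives an exact sequence $0\to C_{\p}\to E_{\p}\to D_{\p}\to0$. By induction, $C_{\p}\in{|\X_{\p}|}_{n-1}^{R_{\p}}$ and $D_{\p}\in{|\X_{\p}|}_{1}^{R_{\p}}$. Hence $E_{\p}$, and therefore its summand $M_{\p}$, lies in ${|\X_{\p}|}_{n}^{R_{\p}}$. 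The right-hand side is closed under $\add_{R_{\p}}$, so this inclusion follows.

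The reverse inclusion $\supseteq$ is the main point. Let $M'\in{|\X_{\p}|}_{n}^{R_{\p}}$. Then $M'$ is a summand of some $E'$ with an exact sequence $0\to C'\to E'\xrightarrow{\pi'} D'\to0$, where $C'\in{|\X_{\p}|}_{n-1}^{R_{\p}}$ and $D'\in{|\X_{\p}|}_1^{R_{\p}}$. By induction, $C'$ is a direct summand of $C_{\p}$ for some $C\in{|\X|}_{n-1}^{R}$, and $D'$ is a direct summand of $D_{\p}$ for some $D\in{|\X|}_{1}^{R}$. Write $C_{\p}=C'\oplus C''$ and $D_{\p}=D'\oplus D''$. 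Adding the trivial sequences $0\to C''\to C''\to0\to0$ and $0\to0\to D''\to D''\to0$ produces an exact sequence $0\to C_{\p}\to F'\to D_{\p}\to0$ with $F'=E'\oplus C''\oplus D''$, of which $M'$ is a summand. This sequence corresponds to a class $\xi\in\Ext^1_{R_{\p}}(D_{\p},C_{\p})\cong\Ext^1_R(D,C)_{\p}$, where the isomorphism holds because $D$ is finitely generated over a noetherian ring. Hence $\xi=\eta/s$ with $\eta\in\Ext^1_R(D,C)$ and $s\notin\p$. Since $s$ is a unit in $R_{\p}$, the classes $\xi$ and $\eta_{\p}=s\xi$ have isomorphic middle terms: multiplication by $s$ on $C_{\p}$ is an automorphism carrying one extension to the other. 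Let $0\to C\to E\to D\to0$ be the extension representing $\eta$. Then $E\in{|\X|}_n^R$ and $E_{\p}\cong F'$, so $M'\in\add_{R_{\p}}\{({|\X|}_n^R)_{\p}\}$. The only step requiring care is this lifting of extension classes through localization; everything else is formal.
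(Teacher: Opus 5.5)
Your proof is correct and takes the same route as the paper, which says only that the lemma follows by induction on $n$ because localization is exact and commutes with $\Ext^{1}(-,-)$. You have filled in the details that one-line proof leaves implicit, including the padding by trivial extensions to get $C_{\p}$ and $D_{\p}$, and the rescaling by the unit $s$ that lifts the class in $\Ext^1_R(D,C)_{\p}$ to an actual extension over $R$.
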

\begin{proof}
The assertion follows by induction on $n$, since the localization functor $(-)_{\p}:\mod R \to \mod R_{\p}$ is exact and dense, and commutes with taking $\Ext^{1}(-,-)$.
\end{proof}
The next lemma plays an important role in passing from local information to global information.
\begin{lem}\label{lem_lift}
Let $(R,\m)$ be a $d$-dimensional noetherian local ring.
Let $M$ be a finitely generated $R$-module, $\X$ a subcategory of $\mod R$, and $n$ a nonnegative integer.
Assume that $\Sing(R)$ is finite and that, for each $\p\in\Sing(R)\setminus\{\m\}$, one has $\Omega^{d}_{R_{\p}}M_{\p}\in{|\X_{\p}|}_{n}^{R_{\p}}$.
Then $\Omega^{s+d}M\in{|\Omega^{s}\X|}_{n}^{\d(R)}$ for all $s\ge0$.
\end{lem}
\begin{proof}
We may assume that $R$ does not have an isolated singularity.
Since $\Omega^{d+s}_{R_{\p}}M_{\p}\in{|\Omega_{R_{\p}}^{s}\X_{\p}|}_{n}^{R_{\p}}$, it is enough to prove the case where $s=0$.
Suppose that $n=0$.
Let $\p\neq\m$ be a prime ideal of $R$.
If $\p\in\Sing(R)$, then $\Omega_{R_{\p}}^{d}M_{\p}=0$ by hypothesis.
If $\p\notin\Sing(R)$, then $R_{\p}$ is a regular local ring of dimension less than $d$. 
Hence one has $\Omega_{R_{\p}}^{d}M_{\p}=0$.
Therefore, $\Omega^{d}M$ is locally free on the punctured spectrum, and this implies that $\Omega^{d}M=0$ in $\d(R)$.
Now assume that $n>0$.
Set $\Sing(R)=\{\m,\p_{1},\ldots,\p_{l}\}$, where $l\ge1$ and $\p_{i}\neq\m$ for all $1\le i\le l$.
Set $\Phi=\{\p_{1},\ldots,\p_{l}\}$ and $\Y={|\X\cup \{R\}|}_{n}^{R}$.
Then $\add_{R_{\p}}{\Y}_{\p}={|\{R_{\p}\}\cup  \X_{\p}|}_{n}^{R_{\p}}$ for all $\p\in\Spec R$ by Lemma \ref{lem_add}.
By hypothesis, one has $(\Omega^{d}M)_{\p}\in\add_{R_{\p}}{\Y}_{\p}$ for all $\p\in\Phi$.
Hence, by \cite[Lemma 3.7(2)]{T2023}, there exists an exact sequence $0\to L\to\Omega^{d}M\oplus N\to Y\to0$ with $Y\in\Y$ such that $\NF(L)\subseteq\NF(\Omega^{d}M)$ and $\NF(L)\cap\Phi=\emptyset$.
Since $\NF(\Omega^{d}M)\subseteq \Sing(R)=\{\m\}\cup\Phi$, one has $\NF(L)\subseteq\{\m\}$. 
This implies that $L=0$ in $\d(R)$.
Hence the above exact sequence yields $\Omega^{d}M\oplus N\cong Y$ in $\d(R)$, and $Y\in{|\X|}_{n}^{\d(R)}$.
Therefore, $\Omega^{d}M\in{|\X|}_{n}^{\d(R)}$.
\end{proof}
We are now ready to establish an upper bound for the rank of $\d(R)$.
\begin{thm}\label{upper}
Let $(R,\m)$ be a $d$-dimensional noetherian local ring.
Assume that $R$ satisfies $(\mathrm{CS})$, is J-1, and satisfies $\dim\Sing(R)\le1$.
Then
$\rank\d(R)\le\sup\{0,\sup_{\p\in\Sing(R)\setminus\{\m\}}\size\Omega^{d}(\mod R_{\p})\}$.
\end{thm}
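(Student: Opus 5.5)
We first dispose of the trivial cases and reduce to the case where $\Sing(R)$ is finite with a nonmaximal singular prime. If $R$ has an isolated singularity, then $\d(R)=0$ by \cite{T2014}, and the right-hand side is at least $0$, so there is nothing to prove. Otherwise, since $R$ is J-1 and $\dim\Sing(R)\le1$, Remark \ref{rem_locus}(2) shows that $\Sing(R)=\{\m,\p_1,\ldots,\p_l\}$ is finite with $l\ge1$. If the inner supremum is infinite, there is again nothing to prove. So we may set $n=\max_{1\le i\le l}\size\Omega^{d}(\mod R_{\p_i})<\infty$; the maximum is over finitely many primes, so it is attained.

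Next we produce a single generator. For each $i$ there is a module $G_i'\in\mod R_{\p_i}$ with $\Omega^{d}(\mod R_{\p_i})\subseteq{|G_i'|}_{n+1}^{R_{\p_i}}$, and since ${|-|}_r$ is monotone in $r$ we may use the common index $n+1$ for all $i$. Because localization $\mod R\to\mod R_{\p_i}$ is dense up to isomorphism, each $G_i'$ is isomorphic to $(G_i)_{\p_i}$ for some $G_i\in\mod R$. Put $G=\bigoplus_{i=1}^{l}G_i$. Then $G_{\p_i}$ contains $G_i'$ as a summand, so $\Omega^{d}(\mod R_{\p_i})\subseteq{|G_{\p_i}|}_{n+1}^{R_{\p_i}}$ for every $i$.

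Now we apply the lifting lemma. Let $M\in\mod R$. For each $\p\in\Sing(R)\setminus\{\m\}$ we have $\Omega^{d}_{R_\p}M_\p\in\Omega^{d}(\mod R_\p)$, hence $\Omega^{d}_{R_{\p}}M_{\p}\in{|\{G\}_{\p}|}_{n+1}^{R_{\p}}$. Lemma \ref{lem_lift}, applied with $\X=\{G\}$ and $s=0$, gives $\Omega^{d}M\in{|G|}_{n+1}^{\d(R)}$. Since $\Omega^{d}\c(R)\subseteq\Omega^{d}(\mod R)$, we get $\Omega^{d}\c(R)\subseteq{|G|}_{n+1}^{\d(R)}$, and ${|G|}_{n+1}^{\d(R)}$ is closed under $\add_{\d(R)}$.

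Finally, Lemma \ref{drgen}, applied with $\X={|G|}_{n+1}^{\d(R)}$ and using $(\mathrm{CS})$, yields $\d(R)={|G|}_{n+1}^{\d(R)}$, so $\rank\d(R)\le n$. The main point to check is the bookkeeping in the step using Lemma \ref{lem_lift}: its hypothesis asks for $\Omega^d_{R_\p}M_\p$ in ${|\X_\p|}_n$ computed in $\mod R_\p$, and we need the single global $G$ to work simultaneously at every $\p_i$. The direct sum construction handles this, because each $G_{\p_i}$ contains a local generator as a summand.
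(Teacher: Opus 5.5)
Your proposal is correct and follows essentially the same route as the paper. You reduce to $\Sing(R)=\{\m,\p_1,\ldots,\p_l\}$ finite, lift the local generators to global modules and take their direct sum, apply Lemma \ref{lem_lift} with $s=0$ to get $\Omega^{d}(\mod R)\subseteq{|G|}_{n+1}^{\d(R)}$, and conclude with Lemma \ref{drgen} using $(\mathrm{CS})$. Your remarks on the monotonicity of ${|-|}_r$ and on $G_{\p_i}$ containing $G_i'$ as a summand only spell out steps the paper leaves implicit.
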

\begin{proof}
We may assume that $\dim\Sing(R)=1$.
Indeed, if $\dim\Sing(R)\le0$, then $R$ has an isolated singularity, and the assertion is clear.
Set $\Sing(R)=\{\m,\p_{1},\ldots,\p_{l}\}$, where $\p_{i}\notin\Max(R)$ for all $1\le i\le l$.
We may assume that $n=\sup\{\size\Omega^{d}(\mod R_{\p})\mid\p\in\Sing(R)\setminus\{\m\}\}$ is finite.
Then, for each $1\le i\le l$, there exists $G_{i}\in\mod R_{\p_{i}}$ such that $\Omega^{d}(\mod R_{\p_{i}})\subseteq{|G_{i}|}^{R_{\p_{i}}}_{n+1}$.
We can take $Y_{i}\in\mod R$ such that $(Y_i)_{\p_i}\cong G_{i}$ for each $1\le i\le l$, and set $Y=\bigoplus_{i=1}^{l}Y_{i}$.
Then one has $\Omega^{d}(\mod R_{\p_{i}})\subseteq{|Y_{\p_{i}}|}^{R_{\p_{i}}}_{n+1}$ for all $1\le i\le l$.
Hence, by Lemma \ref{lem_lift}, we have $\Omega^{d}(\mod R)\subseteq{|Y|}_{n+1}^{\d(R)}$.
Therefore, by Lemma \ref{drgen}, one has $\d(R)={|Y|}_{n+1}^{\d(R)}$.
Thus we conclude that $\rank\d(R)\le n$.
\end{proof}
As an application of Theorem \ref{upper}, we characterize the rank of $\d(R)$ in terms of the ranks of the categories of maximal Cohen--Macaulay modules over the localizations of $R$ at prime ideals in the punctured spectrum.
\begin{cor}\label{equality}
Let $(R,\m)$ be a noetherian local ring which is locally Gorenstein on the punctured spectrum.
Assume that $R$ is J-1 and $\dim\Sing(R)\le1$.
Then the following equality holds:
\[
\rank\d(R)=\sup\{0,\sup_{\p\in\Sing(R)\setminus\{\m\}}\rank\CM(R_{\p})\}.
\]
\end{cor}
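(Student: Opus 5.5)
We prove the two inequalities separately, after a reduction. If $\dim\Sing(R)\le 0$, then $R$ has an isolated singularity, so $\d(R)=0$ by \cite{T2014}. Both sides are then $0$: the inner supremum is empty, or runs over regular $R_{\p}$ with $\CM(R_{\p})=\add R_{\p}$. So we may assume $\dim\Sing(R)=1$. Since $R$ is local, J-1 and $\dim\Sing(R)\le1$, Remark \ref{rem_locus}(2) gives that $\Sing(R)=\{\m,\p_{1},\ldots,\p_{l}\}$ is finite. Hence Proposition \ref{suff_cs} applies, and $R$ satisfies $(\mathrm{CS})$. Each $R_{\p_{i}}$ is Gorenstein of finite Krull dimension, because $R$ is local, hence finite dimensional.

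\emph{Lower bound.} Proposition \ref{lower}(2) gives $\rank\d(R)\ge\sup\{\rank\ds(R_{\p})\mid\p\in\Sing(R)\setminus\{\m\}\}$. Proposition \ref{frob}(2) applied to the Gorenstein rings $R_{\p}$ gives $\rank\ds(R_{\p})=\rank\CM(R_{\p})$. Since ranks are nonnegative, this yields $\rank\d(R)\ge\sup\{0,\sup_{\p}\rank\CM(R_{\p})\}$.

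\emph{Upper bound.} Theorem \ref{upper} gives $\rank\d(R)\le\sup\{0,\sup_{\p}\size\Omega^{d}(\mod R_{\p})\}$, where $d=\dim R$. It therefore suffices to show that $\size\Omega^{d}(\mod R_{\p})\le\rank\CM(R_{\p})$ for each $\p\in\Sing(R)\setminus\{\m\}$.
\begin{itemize}
\item Since $R_{\p}$ is Gorenstein with $\dim R_{\p}\le d$, every $d$-th syzygy over $R_{\p}$ is maximal Cohen--Macaulay, so $\Omega^{d}_{R_{\p}}(\mod R_{\p})\subseteq\CM(R_{\p})$.
\item If $\rank\CM(R_{\p})=m$, there is $G\in\CM(R_{\p})$ with $\CM(R_{\p})={|G|}_{m+1}^{R_{\p}}$. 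Hence $\Omega^{d}(\mod R_{\p})\subseteq{|G|}_{m+1}^{R_{\p}}$, that is, $\size\Omega^{d}(\mod R_{\p})\le m$.
\end{itemize}

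The main point needing care is notation. The $\Omega^{d}(\mod R_{\p})$ in Theorem \ref{upper} means syzygies over $R_{\p}$; this is how it is used in the proof of Theorem \ref{upper} and in Lemma \ref{lem_lift}. Read that way, the inclusion $\Omega^{d}(\mod R_{\p})\subseteq\CM(R_{\p})$ holds trivially, since $d\ge\dim R_{\p}$ and $R_{\p}$ is Gorenstein. Combining the two inequalities gives the claimed equality. (In fact $\Omega^{d}(\mod R_{\p})=\CM(R_{\p})$, as each maximal Cohen--Macaulay module over a Gorenstein ring is an arbitrary-order syzygy, so size and rank actually agree here.)
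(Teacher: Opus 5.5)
Your proof is correct and follows the paper's argument essentially step for step. You reduce to $\dim\Sing(R)=1$, obtain finiteness of $\Sing(R)$ from Remark \ref{rem_locus}(2) and hence $(\mathrm{CS})$ from Proposition \ref{suff_cs}, and then sandwich $\rank\d(R)$ between Proposition \ref{lower} (combined with Proposition \ref{frob}(2)) and Theorem \ref{upper}, using $\Omega^{d}(\mod R_{\p})=\CM(R_{\p})$ so that size is bounded by rank. One small inaccuracy, which does not affect the argument: in the isolated-singularity case the index set $\Sing(R)\setminus\{\m\}$ is always empty because $\Sing(R)$ is specialization-closed, so your alternative where the supremum runs over regular $R_{\p}$ never actually occurs.
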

\begin{proof}
We may assume that $\dim\Sing(R)=1$.
By Remark \ref{rem_locus}(2), the singular locus of $R$ is finite. 
Hence Proposition \ref{suff_cs} shows that $R$ satisfies $(\mathrm{CS})$.
Set $d=\dim R$. 
For every $\p\in\Sing(R)\setminus\{\m\}$, the local ring $R_{\p}$ is Gorenstein and $d\ge\dim R_{\p}$, and hence $\Omega^{d}(\mod R_{\p})=\CM(R_{\p})$. 
Moreover, Proposition \ref{frob}(2) gives $\rank\ds(R_{\p})=\rank\CM(R_{\p})$.
Thus Proposition \ref{lower} and Theorem \ref{upper} yield
\begin{align*}
\sup\{\rank\CM(R_{\p})\mid\p\in\Sing(R)\setminus\{\m\}\}&\le\rank\d(R)\\
&\le\sup\{\size\CM(R_{\p})\mid\p\in\Sing(R)\setminus\{\m\}\}\\
&\le\sup\{\rank\CM(R_{\p})\mid\p\in\Sing(R)\setminus\{\m\}\}.
\end{align*}
Thus all the inequalities are equalities.
\end{proof}
We now give applications of Corollary \ref{equality} by constructing examples for which the rank of $\d(R)$ is equal to one.
The first example is a one-dimensional Cohen--Macaulay local ring.
\begin{ex}\label{eg_dim1}
Let $k$ be a field. 
Consider the ring
\[
R=k[\![x,y,z]\!]/\bigl((x^2,y^2)\cap(y,z^2)\bigr)
 =k[\![x,y,z]\!]/(y^2,x^{2}y,x^{2}z^{2}).
\]
The primary decomposition of the defining ideal shows that $\Spec R=\{\m,(x,y)R,(y,z)R\}$.
We have $\Ass(R)=\Min(R)$. 
Set $\p=(x,y)R$ and $\q=(y,z)R$.
Let $t=x-z$. 
Then $t$ is a nonzerodivisor on $R$, and $R/(t)\cong k[\![x,y]\!]/(y^2,x^{2}y,x^4)$.
Moreover, $\soc R/(t)=(x^3,xy)R/(t)$.
Hence $R$ is a one-dimensional Cohen--Macaulay non-Gorenstein local ring.
On the other hand, we have $R_{\p}\cong k[\![x,y,z]\!]_{(x,y)}/(x^2,y^2)$ and $R_{\q}\cong k[\![x,z]\!]_{(z)}/(z^2)$.
Thus $\Sing(R)=\{\m,\p,\q\}$.
Hence, by Corollary \ref{equality} and Example \ref{eg_rank1}, we obtain 
$\rank \d(R)=\sup\{\rank(\mod R_{\p}),\rank(\mod R_{\q})\}=1$.
\end{ex}
The next example is a two-dimensional local ring which is not Cohen--Macaulay.
\begin{ex}\label{eg_dim2}
Let $k$ be a field. 
Consider the ring
\[
R=k[\![x,y,z,w]\!]/\bigl((z,w)\cap(x,w)\cap(x^2,y,z^2)\bigr)
 =k[\![x,y,z,w]\!]/(x^2w,yw,z^2w,x^2z,xyz,xz^2).
\]
The above decomposition gives $\Ass(R)=\Min(R)=\{(z,w)R,(x,w)R,(x,y,z)R\}$.
In particular, $\dim R=2$ and $R$ is not equidimensional.
Set $t=y-w$.
Then $t$ is a nonzerodivisor on $R$, and $R/(t)\cong k[\![x,y,z]\!]/(x^2y,y^2,yz^2,x^2z,xyz,xz^2)$.
Moreover, $(x^2y,y^2,yz^2,x^2z,xyz,xz^2)=(x,y)\cap(y,z)\cap(x^2,y^2,z^2,xyz)$.
Hence $\m/(t)$ belongs to $\Ass(R/(t))$, and so $\depth R/(t)=0$.
Thus $R$ is a two-dimensional non-Cohen--Macaulay local ring.
Let $S=k[\![x,y,z,w]\!]$, and let $\p\in\Spec R\setminus\{\m\}$.
Let $P$ be the inverse image of $\p$ in $S$.
If $x\notin \p$, then $R_{\p}\cong S_P/(z,w)S_P$, which is regular.
If $y\notin \p$, then $R_{\p}\cong S_P/(w,xz)S_P$, which is a hypersurface of finite CM type by \cite[Theorem 4.18]{LW}.
If $z\notin \p$, then $R_{\p}\cong S_P/(x,w)S_P$, which is regular.
If $w\notin \p$, then $R_{\p}\cong S_P/(x^2,y,z^2)S_P$, which is a complete intersection.
Therefore $R$ is locally a complete intersection on the punctured spectrum.
Using the above descriptions, a direct computation gives $\Sing(R)=\{\m,(x,y,z)R,(x,z,w)R\}$.
Set $\p=(x,y,z)R$ and $\q=(x,z,w)R$.
Then $R_{\p}\cong k[\![x,z,w]\!]_{(x,z)}/(x^{2},z^{2})$, and $\rank(\mod R_{\p})=1$ by Example \ref{eg_rank1}.
Moreover, $R_{\q}\cong k[\![x,y,z]\!]_{(x,z)}/(xz)$, and $\rank\CM(R_{\q})=0$.
Hence, by Corollary \ref{equality}, we have $\rank\d(R)=1$.
\end{ex}
We give an example in which $\rank\d(R)$ is exactly two.
\begin{ex}\label{eg_rank2}
Let $k$ be a field.
Consider the ring \[
R=k[\![x,y,z,w]\!]/(x^{2}z,x^{2}w^{2},y^{3},y^{2}w^{2},z^{2})=k[\![x,y,z,w]\!]/\bigl((x^{2},y^{2},z^{2})\cap(y^{3},z,w^{2})\cap(x^{2},y^{3},z^{2},w^{2})\bigr).
\]
Set $\p=(x,y,z)R$ and $\q=(y,z,w)R$.
The above decomposition gives $\Spec R=\Ass(R)=\{\p,\q,\m\}$ and $\Min(R)=\{\p,\q\}$.
In particular, $\dim R=1$.
Since $\m\in\Ass(R)$, we have $\depth R=0$.
Thus $R$ is a one-dimensional non-Cohen--Macaulay local ring.
Let $S=k[\![x,y,z,w]\!]$.
Then one has $R_{\p}\cong S_{(x,y,z)}/(x^{2},y^{2},z^{2})$ and $R_{\q}\cong S_{(y,z,w)}/(y^{3},z,w^{2})\cong k[\![x,y,w]\!]_{(y,w)}/(y^{3},w^{2})$.
Therefore $R$ is locally a complete intersection on the punctured spectrum, and we have $\Sing(R)=\{\m,\p,\q\}$.
Moreover, $\rank(\mod R_{\p})=2$ and $\rank(\mod R_{\q})=1$ by Example \ref{eg_rank1}.
Hence, by Corollary \ref{equality}, we have $\rank\d(R)=\sup\{\rank(\mod R_{\p}),\rank(\mod R_{\q})\}=2$.
\end{ex}
As an application of Corollary \ref{equality}, we obtain another sufficient condition for $\d(R)$ to have finite rank.
\begin{cor}\label{rankfin2}
Let $R$ be a noetherian local ring which is locally excellent Gorenstein on the punctured spectrum.
Assume that $\Sing(R)$ is finite.
Then the rank of $\d(R)$ is finite.
\end{cor}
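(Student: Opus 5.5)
Since $\Sing(R)$ is finite, Remark \ref{rem_locus}(2) shows that $R$ is J-1 and $\dim\Sing(R)\le 1$. Also, $R$ is locally Gorenstein on the punctured spectrum. So Corollary \ref{equality} applies and gives
\[
\rank\d(R)=\sup\{0,\sup_{\p\in\Sing(R)\setminus\{\m\}}\rank\CM(R_{\p})\}.
\]
If $R$ has an isolated singularity, the inner set is empty. In that case $\d(R)$ is trivial and its rank is $0$. So assume $\Sing(R)=\{\m,\p_1,\ldots,\p_l\}$ with $l\ge1$ finite. Then the supremum is a maximum over finitely many primes, and it suffices to show that $\rank\CM(R_{\p_i})<\infty$ for each $i$.

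Fix $\p=\p_i\neq\m$. By hypothesis $R_{\p}$ is an excellent Gorenstein local ring, and it has finite Krull dimension (at most $\dim R$). Proposition \ref{frob}(2) then gives $\rank\CM(R_{\p})=\rank\ds(R_{\p})<\infty$, since under these hypotheses the ranks there are finite. That proposition rests on \cite[Corollary 3.12]{DLT}, which needs excellence (or quasi-excellence), and this is exactly what the hypothesis ``locally excellent on the punctured spectrum'' provides at $\p$. Taking the maximum of finitely many finite numbers gives $\rank\d(R)<\infty$.

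The only point requiring care is to check that nothing in Corollary \ref{equality} needs excellence of $R$ itself: its hypotheses are only the local Gorenstein property, J-1, and $\dim\Sing(R)\le1$. That is why this argument yields finiteness even for non-excellent $R$, in contrast with Proposition \ref{rankfin1}. I do not expect a real obstacle beyond verifying that the finiteness of $\Sing(R)$ makes the supremum a finite maximum, as noted above.
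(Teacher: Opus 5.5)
Your proof is correct and follows the paper's argument: the paper also combines Corollary \ref{equality} with the finiteness statement of Proposition \ref{frob}(2), applied to each excellent Gorenstein localization $R_{\p}$ at the finitely many primes of $\Sing(R)\setminus\{\m\}$. Your use of Remark \ref{rem_locus}(2) to get J-1 and $\dim\Sing(R)\le1$ from the finiteness of $\Sing(R)$ correctly fills in the step the paper leaves implicit, and your remark that $R$ itself need not be excellent is the point illustrated by Example \ref{eg_notexc}.
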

\begin{proof}
Combining Proposition \ref{frob}(2) with Corollary \ref{equality}, the assertion follows.
\end{proof}
We provide an example of Corollaries \ref{equality} and \ref{rankfin2}.
In the following example, the ring $R$ is not excellent, while $\d(R)$ is nontrivial and has finite rank.
\begin{ex}\label{eg_notexc}
There exists a one-dimensional local domain $S$ which is not excellent; see \cite{A1935} for instance.
Consider the ring $R=S[\![x,y]\!]/(x^{a},y^{b})$, where $a,b\ge 2$.
Then $R$ is a one-dimensional local ring which is not excellent. 
Set $\p=(x,y)R$.
Then one has
$R_{\p}\cong S[\![x,y]\!]_{(x,y)}/(x^{a},y^{b})
\cong Q(S)[x,y]/(x^{a},y^{b})$, where $Q(S)$ denotes the field of fractions of $S$.
Hence, $\Spec R=\Sing(R)=\{\p,\m\}$ and $R$ is locally excellent Gorenstein on the punctured spectrum.
Thus it follows from Corollary \ref{rankfin2} that $\rank\d(R)$ is finite.
In the present example, an argument similar to that in Example \ref{eg_rank1} shows that $1\le\rank(\mod R_{\p})\le \min\{a-1,b-1\}$.
Hence by Corollary \ref{equality} we have $1\le\rank\d(R)\le\min\{a-1,b-1\}$.
\end{ex}
We record here that the Rouquier dimension analogue of Corollary \ref{equality} also holds. 
We note that, when one considers Rouquier dimension, there is no need to control shifts. 
Thus condition $(\mathrm{CS})$ is not required, and consequently the assumption that $R$ is locally Gorenstein on the punctured spectrum can be omitted. 
For the definition of the operation $[-]^{R}_{n}$ in $\mod R$, we refer the reader to \cite{BSTT,DT2014}.
\begin{prop}\label{rdim}
Let $R$ be a commutative noetherian ring.
Let $\Phi$ be a specialization-closed subset of $\Spec R$, and set $\T=\db(R)/\thick\{R,R/\p\mid\p\in\Phi\}$.
Let $\X$ be a subcategory of $\db(R)$, let $\C$ be a subcategory of $\mod R$, and let $n\geq0$ be an integer.
\begin{enumerate}[\rm(1)]
\item
If $\T={\langle\X\rangle}_{n}^{\T}$, then we have  $\ds(R_{\q})={\langle\X_{\q}\rangle}_{n}^{\ds(R_{\q})}$ for all $\q\in\Spec R\setminus\Phi$.
\item
One has
\begin{align*}
\dim\T&\ge\sup\{\dim\ds(R_{\q})\mid\q\in\Spec R\setminus\Phi\} \\
&\ge\sup\{\dim\ds(R_{\q})\mid\q\in\Sing(R)\setminus\Phi\}.
\end{align*}
In particular, if $(R,\m,k)$ is local, then the inequality $\dim\d(R)\ge\sup\{\dim\ds(R_{\q})\mid\q\in\Sing(R)\setminus\{\m\}\}$ holds.
\item
Let $\p$ be a prime ideal of $R$.
Then one has $\add_{R_{\p}}\{({[\C]}_{n}^{R})_{\p}\}={[\C_{\p}]}_{n}^{R_{\p}}$.
\item
Let $(R,\m)$ be a $d$-dimensional noetherian local ring and $M$ a finitely generated $R$-module.
Let $a\ge d$ be an integer.
Assume that $\Sing(R)$ is finite and that, for each $\p\in\Sing(R)\setminus\{\m\}$, one has $\Omega^{a}_{R_{\p}}M_{\p}\in{[\C_{\p}]}_{n}^{R_{\p}}$.
Then $\Omega^{s+a}M\in{\langle\Omega^{s}\C\rangle}_{n}^{\d(R)}$ for all $s\ge0$.
\item
Assume that $(R,\m)$ is local and $\Sing(R)$ is finite.
If $\ds(R_{\p})={\langle\C_{\p}\rangle}_{n}^{\ds(R_{\p})}$ for all $\p\in\Sing(R)\setminus\{\m\}$, then one has $\d(R)={\langle\C\rangle}_{n}^{\d(R)}$.
\item
Assume that $(R,\m)$ is local, J-1, and $\dim\Sing(R)\le1$.
Then the following equality holds:
\[
\dim\d(R)=\sup\{0,\sup_{\p\in\Sing(R)\setminus\{\m\}}\dim\ds(R_{\p})\}.
\]
\end{enumerate}
\end{prop}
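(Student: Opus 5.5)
Proposition \ref{rdim} is the Rouquier-dimension analogue of Propositions \ref{lower}, \ref{lem_add}, Lemma \ref{lem_lift}, Theorem \ref{upper} and Corollary \ref{equality}, so I would prove each part by running the corresponding rank argument again. The difference is that shifts are now allowed at every stage, which is why condition $(\mathrm{CS})$ and the Gorenstein hypothesis are not needed.

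For (1), I would use the exact dense functor $\T\to\ds(R_{\q})$ of Remark \ref{rmk_vl}. An exact functor sends ${\langle\X\rangle}_{n}^{\T}$ into ${\langle\X_{\q}\rangle}_{n}^{\ds(R_{\q})}$: it commutes with shifts, sums, summands and triangles. Density then gives the equality. Part (2) follows immediately from (1), exactly as in Proposition \ref{lower}.

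For (3), I would induct on $n$, as in Lemma \ref{lem_add}. Recall that ${[\C]}_{1}$ is the additive closure of all syzygies of objects of $\C$ together with $R$, and ${[\C]}_{n}$ is built by extensions. Localization is exact, commutes with syzygies up to free summands (minimal resolutions localize to resolutions), and every extension over $R_{\p}$ of localized modules lifts, since $\Ext^{1}$ localizes. This gives both inclusions.

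For (4), I would follow Lemma \ref{lem_lift}. First reduce to $s=0$, since syzygies commute with localization and preserve ${[-]}_{n}$ up to free summands. Set $\Y={[\C]}_{n}^{R}$. By (3), $(\Omega^{a}M)_{\p}\in\add\Y_{\p}$ for each non-maximal $\p$ in the finite set $\Phi=\Sing(R)\setminus\{\m\}$. Then \cite[Lemma 3.7(2)]{T2023} yields an exact sequence $0\to L\to\Omega^{a}M\oplus N\to Y\to0$ with $Y\in\Y$ and $\NF(L)\subseteq\{\m\}$; here I use $a\ge d$ and Remark \ref{rem_locus}(3) to get $\NF(\Omega^{a}M)\subseteq\Sing(R)$. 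So $\Omega^{a}M$ is a summand of $Y$ in $\d(R)$. Finally, every object of ${[\C]}_{n}^{R}$ lies in ${\langle\C\rangle}_{n}^{\d(R)}$, because a syzygy $\Omega^{i}C$ is isomorphic to $C[-i]$ in $\d(R)$ and $R$ vanishes there. The case $n=0$ is handled as in Lemma \ref{lem_lift}.

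For (5), take $X\in\d(R)$. By \cite[Lemma 2.4(2)]{DT2015b}, $X\cong\Omega^{a}M[l]$ for some module $M$ and $a\ge d$. It suffices to show $\Omega^{a}M\in{\langle\C\rangle}_{n}^{\d(R)}$, since shifts are free here; this is the point where $(\mathrm{CS})$ is avoided. The main obstacle is translating the hypothesis $\ds(R_{\p})={\langle\C_{\p}\rangle}_{n}$ into the module-level statement $\Omega^{a}_{R_{\p}}M_{\p}\in{[\C_{\p}]}_{n}^{R_{\p}}$ required by (4). For this I would invoke the comparison between ${\langle-\rangle}_{n}$ in $\ds$ and ${[-]}_{n}$ on high syzygies (e.g.\ \cite{BSTT} or \cite[Lemma 2.4]{DT2015b}). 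It says that $\Omega^{a'}_{R_{\p}}M_{\p}\in{[\C_{\p}]}_{n}$ for some $a'$, possibly after replacing $\C$ by suitable syzygies of $\C$, i.e.\ by $\Omega^{-j}$-shifts. The delicate point is that $\Phi$ is finite, so a single $a'$ can be chosen uniformly over all $\p\in\Phi$. Negative shifts appearing in $\C_{\p}$ are absorbed by taking more syzygies of $M$ and then applying (4) with $s$ adjusted.

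Finally, (6) combines (2) with (5), applied with $\C$ a finite module whose localizations give generators realizing $\dim\ds(R_{\p})$ at each $\p\in\Sing(R)\setminus\{\m\}$. Such a module exists because the singular locus is finite (Remark \ref{rem_locus}(2)), and objects of $\ds(R_{\p})$ can be taken to be localized modules up to shift.
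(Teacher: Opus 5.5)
Your proposal is correct and follows essentially the same route as the paper. Parts (1)--(4) rerun the arguments of Proposition~\ref{lower} and Lemmas~\ref{lem_add} and~\ref{lem_lift}, noting that ${[\C]}_{n}^{R}$ contains $R$ and maps into ${\langle\C\rangle}_{n}^{\d(R)}$. Part (5) uses the \cite{BSTT}-type comparison with one exponent $a\ge d$ chosen uniformly over the finite set $\Sing(R)\setminus\{\m\}$. Part (6) combines (2) and (5) after taking the local strong generators to be modules.

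One wording fix in (5): no replacement of $\C$ is needed, and the paper notes that the \cite{BSTT} argument must be run for a subcategory $\C$ rather than a single object. Syzygies of $\C$ already lie in ${[\C]}_{1}$, and the positive shifts $C[i]$ with $i>0$ are exactly what taking more syzygies of $M$ absorbs.
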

\begin{proof}
The assertions in (1)--(4) follow, respectively, by arguments analogous to those in Proposition \ref{lower} and Lemmas \ref{lem_add} and \ref{lem_lift}. 
We only point out the differences arising from the use of ${|-|}_{n}$, ${[-]}^{R}_{n}$, and ${\langle-\rangle}_{n}$.
For (1) and (2), for every $\q\in\Spec R\setminus\Phi$, the induced functor $\T\to\ds(R_{\q})$ is a triangle functor.
Hence, for every subcategory $\X$ of $\T$ and every $n\ge 0$, one has $({\langle\X\rangle}^{\T}_{n})_{\q}\subseteq{\langle\X_{\q}\rangle}^{\ds(R_{\q})}_{n}$.
For (3), localization commutes with taking syzygies up to free summands.
Therefore, the argument in Lemma \ref{lem_add} also applies to the operation ${[-]}^{R}_{n}$. 
Finally, for (4), let $\pi$ be the composition of the natural functors $\mod R\to\db(R)\to\d(R)$.
For a subcategory $\C$ of $\mod R$, the image of ${[\C]}^{R}_{n}$ under $\pi$ is contained in ${\langle\C\rangle}^{\d(R)}_{n}$; see \cite[Lemma 2.4]{DT2015b} or \cite[Lemma 3.9]{MT}.
(5) Let $X$ be an object of $\d(R)$. 
Then there exist $M \in \mod R$ and $l \in \mathbb{Z}$ such that $X \cong M[l]$ in $\d(R)$. 
Since $\Sing(R)$ is finite, there exists an integer $a \geq \dim R$ such that $\Omega_{R_{\p}}^{a} M_{\p} \in [\C_{\p}]_{n}^{R_{\p}}$ for all $\p \in \Sing(R) \setminus \{\m\}$. 
Here we use the following observation. 
The proof of \cite[Proposition 5.3]{BSTT} also shows that the object $G$ in that proposition can be replaced by an arbitrary subcategory $\C$ of a resolving subcategory $\X$.
Hence, by (4), we have $\Omega^{a}M \in {\langle \C \rangle}_{n}^{\d(R)}$.
Therefore $X \cong \Omega^{a}M[l+a] \in {\langle \C \rangle}_{n}^{\d(R)}$.
(6) We may assume that $\dim\Sing(R)=1$.
The lower bound follows immediately from (2). 
For the upper bound, it follows at once from (5), once one observes that a strong generator of the singularity category may be chosen to be a module, without increasing the number of cones required.
\end{proof}
\begin{rem}\label{rmk_cont}
Combining Propositions \ref{lower}, \ref{suff_cs} and Lemmas \ref{drgen}, \ref{lem_lift}, one obtains the following assertion.
Let $(R,\m)$ be a noetherian local ring. 
Assume that $\Sing(R)$ is finite and $R$ is Gorenstein on the punctured spectrum.
Then $R$ is of countable CM type on the punctured spectrum if and only if $\d(R)$ is the additive closure of a countable collection of objects.
However, if either of the equivalent conditions above holds, then the local ring $R_{\p}$ is of countable CM type and has an isolated singularity for every $\p \in \Sing(R) \setminus \{\m\}$.
Hence, from the viewpoint of the question of Huneke--Leuschke \cite{HL2003} (see also \cite{S2014}), this is almost the same as assuming finite CM type on the punctured spectrum.
\end{rem}
In view of Remark \ref{rmk_cont}, the following question naturally arises.
\begin{ques}
Does the equality in Proposition \ref{rdim}(6) still hold when $\dim\Sing(R)\ge 2$?
\end{ques}
\section{Finite syzygy type on the punctured spectrum}
In this section, we point out that one of the assumptions in \cite[Theorem 4.7]{MT}, namely the assumption that $R$ is locally Cohen--Macaulay on the punctured spectrum, can be removed.
We say that a local ring $R$ is of {\em finite syzygy type} if $\Omega^{n}(\mod R)\subseteq \add G$ for some $G\in\mod R$ and some $n\ge0$.
In this case, one has $n\ge\dim R$, and $R$ has an isolated singularity by \cite[Proposition 4.6]{DLMO} and \cite[Corollary 3.12]{DKLO}.
\begin{prop}\label{MT4.7}
Let $R$ be a noetherian local ring.
Assume that $R$ is J-1 and locally of finite syzygy type on the punctured spectrum.
Then $\d(R)$ admits an additive generator.
\end{prop}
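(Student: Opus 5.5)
Our plan is to reduce to finitely many singular primes and then glue local generators as in Lemma \ref{lem_lift} and Theorem \ref{upper}, using syzygies to absorb shifts so that condition $(\mathrm{CS})$ is not needed. Set $d=\dim R$. First, $\Sing(R)$ has dimension at most one. If $\p\in\Sing(R)\setminus\{\m\}$, then $R_{\p}$ is of finite syzygy type, so by the remark preceding the proposition it has an isolated singularity. Hence, for every chain $\p_0\subsetneq\p_1$ in $\Sing(R)$ we must have $\p_1=\m$, exactly as in the proof of Proposition \ref{sing}(1)(c) with $\Phi=\{\m\}$. Since $R$ is J-1, $\Sing(R)$ is closed. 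It therefore has finitely many minimal elements, and every non-minimal element equals $\m$, so $\Sing(R)=\{\m,\p_1,\ldots,\p_l\}$ is finite. If $l=0$, then $R$ has an isolated singularity and $\d(R)=0$ by \cite{T2014}, so we may assume $l\ge1$.

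Next, choose for each $i$ an integer $n_i$ and $G_i\in\mod R_{\p_i}$ with $\Omega^{n_i}(\mod R_{\p_i})\subseteq\add G_i$. Let $n\ge d$ be at least all the $n_i$. Since syzygies of modules in $\add G_i$ lie in $\add(\Omega G_i\oplus R_{\p_i})$, we may enlarge $G_i$ so that $\Omega^{n}_{R_{\p_i}}(\mod R_{\p_i})\subseteq\add G_i$. Lift each $G_i$ to some $Y_i\in\mod R$ and put $Y=\bigoplus_iY_i\oplus R$. For every $M\in\mod R$ we then have $(\Omega^{n}M)_{\p_i}\in\add_{R_{\p_i}}Y_{\p_i}$ for all $i$. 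We apply \cite[Lemma 3.7(2)]{T2023} with $\Phi=\{\p_1,\ldots,\p_l\}$, exactly as in the proof of Lemma \ref{lem_lift} with $n=1$. This gives an exact sequence $0\to L\to\Omega^{n}M\oplus N\to Y'\to0$ with $Y'\in\add Y$ and $\NF(L)\subseteq\NF(\Omega^nM)\setminus\Phi\subseteq\{\m\}$, using Remark \ref{rem_locus}(3) and $n\ge d$. Hence $L=0$ in $\d(R)$, and so $\Omega^{n}M\in\add_{\d(R)}Y$. In short, $\Omega^{n}(\mod R)\subseteq\add_{\d(R)}Y$.

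The remaining step, which we expect to be the main obstacle, is handling shifts without $(\mathrm{CS})$. We do not use Lemma \ref{drgen}. Instead we argue directly: any $X\in\d(R)$ is isomorphic to $M[l]$ for some module $M$ and $l\in\ZZ$, by \cite[Lemma 2.4]{DT2015b}. Replacing $M$ by a high syzygy, we may take $l$ as negative as we like, since $\Omega^jM\cong M[-j]$ in $\d(R)$. So it suffices to show that $\add_{\d(R)}(\Omega^nY)$ is closed under all shifts $[-j]$ with $j\ge0$ and that every $M[l]$ is reached. Concretely, $X\cong\Omega^{n+j}M$ for suitable $M$ and $j\ge0$ with $l=-(n+j)$, and then $\Omega^{n+j}M=\Omega^n(\Omega^jM)\in\add_{\d(R)}Y$ by the previous paragraph. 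Thus $\d(R)=\add_{\d(R)}Y$, and $Y$ is the desired additive generator. The point to check carefully is the first reduction: that every object of $\d(R)$ has the form $\Omega^{m}M$ for a module $M$ and an arbitrarily large $m$, with no positive shift remaining. We would verify this using the description in \cite[Lemma 2.4(2)]{DT2015b} that every object of $\ds(R)$ is $N[i]$ for a module $N$. If a positive shift $N[i]$ with $i>0$ appears, we would write $N\cong\Omega^{i+n}N'$ in $\d(R)$ for some module $N'$, which requires $N$ to be a high syzygy. This is the delicate part: if it fails, we would fall back on proving a $(\mathrm{CS})$-type statement for this setting by mimicking the proof of Proposition \ref{suff_cs}, with the Gorenstein identification $\c(R_{\p})=\Omega^{d+1}(\mod R_{\p})$ replaced by the finite syzygy type inclusion $\Omega^{n}(\mod R_{\p_i})\subseteq\add G_i$.
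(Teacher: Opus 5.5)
There is a genuine gap: the shift step does not work as written. Your first two steps are correct. The finiteness of $\Sing(R)$ is fine, and the gluing via \cite[Lemma 3.7(2)]{T2023} giving $\Omega^{n}(\mod R)\subseteq\add_{\d(R)}Y$ is the same technique as in Lemma \ref{lem_lift}. (The paper itself simply reduces to the proof of \cite[Theorem 4.7]{MT} with $d$ replaced by $a$.) Your reduction of arbitrary objects to syzygies runs in the wrong direction. From $X\cong M[l]$ and $\Omega^{j}M\cong M[-j]$ one gets $X\cong(\Omega^{j}M)[l+j]$, so passing to higher syzygies makes the shift \emph{larger}, not smaller. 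The description from \cite[Lemma 2.4(2)]{DT2015b} is $X\cong\Omega^{n}M[l]$ with $l$ in general positive. To remove a positive shift you must write a module, up to summands in $\d(R)$, as a syzygy of another module, i.e.\ go one syzygy \emph{up}. That is exactly a $(\mathrm{CS})$-type property, the very thing Lemma \ref{drgen} has to assume. So your direct argument covers only objects of the form $\Omega^{m}M$ with $m\ge n$, and the proof rests entirely on the fallback, which you do not carry out.

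The fallback as stated also lacks its key input. In Proposition \ref{suff_cs}, Gorensteinness is used precisely to know that the relevant modules are locally $(d+1)$-st syzygies. The inclusion $\Omega^{n}(\mod R_{\p_i})\subseteq\add G_i$ gives nothing of this sort, since $G_i$ need not be a syzygy. One way to supply it is a stabilization argument:
\begin{itemize}
\item The subcategories $\add\Omega^{j}(\mod R_{\p_i})$, $j\ge n$, form a descending chain inside $\add G_i$, and this chain stabilizes. To see this, pass to completions: $\add\widehat{G_i}$ is Krull--Schmidt with finitely many indecomposables, and $\widehat{U}\mid\widehat{V}$ implies $U\mid V$ for finitely generated modules.
\item Hence, after enlarging $n$, one has $\Omega^{n}(\mod R_{\p_i})\subseteq\add\Omega^{n+1}(\mod R_{\p_i})$ for all $i$.
\item Now run the gluing of Proposition \ref{suff_cs} with $\X=\Omega^{n+1}(\mod R)$. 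It shows that each $M\in\Omega^{n}(\mod R)$ is a direct summand in $\d(R)$ of some $\Omega^{n+1}X'$, so $M[1]\in\add_{\d(R)}\Omega^{n}X'$.
\item Thus $\add_{\d(R)}\Omega^{n}(\mod R)$ is closed under $[1]$. Together with your second step and $X\cong\Omega^{n}M[l]$, this gives $\d(R)=\add_{\d(R)}Y$.
\end{itemize}
Without an argument of this kind, the proposal does not establish the statement.
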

\begin{proof}
We may assume that $R$ does not have an isolated singularity.
By hypothesis, $R_{\p}$ has an isolated singularity for every $\p\in\Spec R\setminus\{\m\}$.
Hence we have $\dim\Sing(R)=1$.
Set $\Sing(R)=\{\m,\p_{1},\ldots,\p_{n}\}$.
By hypothesis, there is an integer $a\ge\dim R$ such that for each $1\le i\le n$, there exists a module $Z_{i}\in\mod R_{\p_{i}}$ with $\Omega^{a}(\mod R_{\p_{i}})\subseteq\add_{R_{\p_{i}}}Z_{i}$.
The rest follows from the proof of \cite[Theorem 4.7]{MT}, replacing $d$ with $a$.
Thus $\d(R)$ admits an additive generator.
\end{proof}
We end this paper by giving an application of Proposition \ref{MT4.7}.
The following example is not covered by \cite[Theorem 4.7]{MT}.
\begin{ex}
Let $k$ be a field, and consider the ring $R=k[\![x,y,z]\!]/(x^2,xy)$.
It is easy to see that $\Sing(R)=\{\m,(x,y)R\}$.
Set $\p=(x,y)R$.
Then $R_{\p}\cong k[\![x,y,z]\!]_{(x,y)}/(x^2,xy)$ is not Cohen--Macaulay, but it is of finite syzygy type by \cite[Example 3.17(2)]{DKLO}.
Indeed, the argument in \cite[Example 3.17(2)]{DKLO} works for the ring $S/(x^2,xy)$, where $S$ is a two-dimensional regular local ring with a regular system of parameters $x,y$.
Hence, by Proposition \ref{MT4.7}, we conclude that $\d(R)$ admits an additive generator.
\end{ex}
\begin{ac}
The authors would like to thank Ryo Takahashi for his thoughtful questions and helpful discussions.
Souvik Dey was partly supported by the Charles University Research Center program No.UNCE/SCI/022 and a grant GA CR 23-05148S from the Czech Science Foundation.
Yuki Mifune was partly supported by Grant-in-Aid for JSPS Fellows 25KJ1386.
\end{ac}


\begin{thebibliography}{99}
\bibitem{AIT}
{\sc T. Araya; K.-I. Iima; R. Takahashi}, 
Vanishing of DHKK complexities for singularity categories and generation of syzygy modules, {\em Kyoto J. Math.} (to appear), {\tt arXiv:2310.15475}.
\bibitem{AT2015}
{\sc T. Aihara; R. Takahashi}, Generators and dimensions of derived categories of modules, {\em Comm. Algebra} {\bf 43} (2015), no. 11, 5003--5029.
\bibitem{A1935}
{\sc Y. Akizuki}, Einige Bemerkungen \"uber prim\"are Integrit\"atsbereiche mit Teilerkettensatz, {\em Proc. Phys.-Math. Soc. Japan} {\bf 17} (1935), 327--336.
\bibitem{Aoki}
{\sc K. Aoki}, Quasiexcellence implies strong generation, {\em J. Reine Angew. Math.}, {\bf 780} (2021), 133--138.
\bibitem{BSTT}
{\sc A. Bahlekeh; S. Salarian; R. Takahashi; Z. Toosi}, Spanier-Whitehead categories of resolving subcategories and comparison with singularity categories, {\em Algebr. Represent. Theory} {\bf 25} (2022), no. 3, 595--613.
\bibitem{BIKO2010}
{\sc P. A. Bergh; S. B. Iyengar; H. Krause; S. Oppermann}, Dimensions of triangulated categories via Koszul objects, {\em Math. Z.} {\bf 265} (2010), no. 4, 849--864.
\bibitem{BV}
{\sc A. Bondal; M. van den Bergh}, Generators and representability of functors in commutative and noncommutative geometry, {\em Mosc. Math. J.} {\bf 3} (2003), no. 1, 1--36, 258.
\bibitem{B}
{\sc R.-O. Buchweitz}, Maximal Cohen--Macaulay modules and Tate cohomology, With appendices and an introduction by L. L. Avramov, B. Briggs, S. B. Iyengar and J. C. Letz, Math. Surveys Monogr. {\bf 262}, {\em American Mathematical Society, Providence, RI}, 2021.
\bibitem{Chen2011}
{\sc X.-W. Chen}, The singularity category of an algebra with radical square zero, {\em Doc. Math.} {\bf 16} (2011), 921--936.
\bibitem{CDEKPU}
{\sc D. T. Cuong; H. Dao; D. Eisenbud; T. Kobayashi; C. Polini; B. Ulrich}, Syzygies of the residue field over Golod rings, preprint (2024), {\tt arXiv:2408.13425v3}.
\bibitem{CK}
{\sc D. T. Cuong; T. Kobayashi}, On direct summands of syzygies of the residue field of a local ring, preprint (2025), {\tt arXiv:2510.24220v1}.
\bibitem{DKT}
{\sc H. Dao; T. Kobayashi; R. Takahashi}, Burch ideals and Burch rings, {\em Algebra Number Theory} {\bf 14} (2020), no. 8, 2121--2150.
\bibitem{DT2014}
{\sc H. Dao; R. Takahashi}, The radius of a subcategory of modules, {\em Algebra Number Theory} {\bf 8} (2014), no. 1, 141--172.
\bibitem{DT2015a}
{\sc H. Dao; R. Takahashi}, The dimension of a subcategory of modules, {\em Forum Math Sigma} {\bf 3} (2015), e19, 31 pp.
\bibitem{DT2015b}
{\sc H. Dao; R. Takahashi}, Upper bounds for dimensions of singularity categories, {\em C. R. Math. Acad. Sci. Paris} {\bf 353} (2015), no. 4, 297--301.
\bibitem{DKLO}
{\sc S. Dey; K. Kimura; J. Liu; Y. Otake}, On local rings of finite syzygy representation type, preprint (2025), {\tt arXiv:2507.17097v2}.
\bibitem{DLT}
{\sc S. Dey; P. Lank; R. Takahashi}, Strong generation for module categories, {\em J. Pure Appl. Algebra} {\bf 229} (2025), no. 10, Paper No. 108070, 16pp.
\bibitem{DLMO}
{\sc S. Dey; J. Liu; Y. Mifune; Y. Otake}, Generation of singularity categories and infinite injective dimension locus via annihilation of cohomologies, {\em Canad. J. Math.}, published online 8 June 2026, doi:10.4153/S0008414X26102338.
\bibitem{Hap}
{\sc D. Happel}, Triangulated categories in the representation theory of finite-dimensional algebras, London Mathematical Society Lecture Note Series {\bf 119}, {\it Cambridge University Press, Cambridge}, 1988.
\bibitem{HL2003}
{\sc C. Huneke; G. J. Leuschke}, Local rings of countable Cohen--Macaulay type, {\em Proc. Amer. Math. Soc.} {\bf 131} (2003), no. 10, 3003--3007.
\bibitem{IT2016}
{\sc S. B. Iyengar; R. Takahashi}, Annihilation of cohomology and strong generation of module categories, {\em Int. Math. Res. Not. IMRN} (2016), no. 2, 499--535.
\bibitem{IT2019}
{\sc S. B. Iyengar; R. Takahashi}, Openness of the regular locus and generators for module categories, {\em Acta Math. Vietnam.} {\bf 44} (2019), no. 1, 207--212.
\bibitem{LW}
{\sc G. J. Leuschke; R. Wiegand}, Cohen--Macaulay representations, Math. Surveys Monogr. {\bf 181}, {\em American Mathematical Society, Providence, RI}, 2012.
\bibitem{Mat.CA}
{\sc H. Matsumura}, Commutative algebra, Second edition, Mathematics Lecture Note Series, 56, {\em Benjamin/Cummings
Publishing Co., Inc., Reading, Mass.}, 1980.
\bibitem{M8}
{\sc Y. Mifune}, Structure of modules stably annihilated by a fixed ideal, preprint (2026), {\tt arXiv:2508.16137v2}.
\bibitem{MT}
{\sc Y. Mifune; R. Takahashi}, On a Verdier quotient of a derived category of a local ring,
{\em J. Math. Soc. Japan} {\bf 77} (2025), no. 2, 563--579.
\bibitem{NT2020}
{\sc S. Nasseh and R. Takahashi}, Local rings with quasi-decomposable maximal ideal, {\em Math. Proc. Cambridge Philos. Soc.} {\bf 168} (2020), no. 2, 305--322.
\bibitem{Rou}
{\sc R. Rouquier}, Dimensions of triangulated categories, {\em J. K-Theory} {\bf 1} (2008), no. 2, 193--256.
\bibitem{S2014}
{\sc B. Stone}, Non-Gorenstein isolated singularities of graded countable Cohen--Macaulay type, in {\em Connections between algebra, combinatorics, and geometry}, Springer Proc. Math. Stat., {\bf 76}, Springer, New York, 2014, 299--317.
\bibitem{T2014}
{\sc R. Takahashi}, Reconstruction from Koszul homology and applications to module and derived categories, {\em Pacific J. Math.} {\bf 268} (2014), no. 1, 231--248.
\bibitem{T2017}
{\sc R. Takahashi}, Thick subcategories over isolated singularities, {\em Pacific J. Math.} {\bf 291} (2017), no. 1, 183--211.
\bibitem{T2023}
{\sc R. Takahashi}, Dominant local rings and subcategory classification, {\em Int. Math. Res. Not. IMRN} (2023), no. 9, 7259--7318.
\bibitem{T2023c}
{\sc R. Takahashi}, Remarks on complexities and entropies for singularity categories, {\em C. R. Math. Acad. Sci. Paris} {\bf 361} (2023), 1611--1623.
\bibitem{T2026}
{\sc R. Takahashi}, Uniformly dominant local rings and Orlov spectra of singularity categories, {\em Math. Z.} {\bf 312} (2026), no. 4, Paper No. 117, 27pp.
\end{thebibliography}
\end{document}